\newtheorem{Thm}{Theorem}[section]
\newtheorem{Cor}[Thm]{Corollary}
\newtheorem{Lem}[Thm]{Lemma}
\newtheorem{Prop}[Thm]{Proposition}
\theoremstyle{definition}
\theoremstyle{remark}
\newtheorem{remark}{Remark}
\def\ldots{\mathinner{\ldotp\ldotp\ldotp}}
\def\cdots{\mathinner{\cdotp\cdotp\cdotp}}
\def \cal{\mathcal}
\def \diam{\text{diam }}
\def\eps{\varepsilon}
\def\Ndb{\mathbb N}
\newcommand{\brls}{\sum_{t\le s}x_{l+1,t}}
\newcommand{\brlls}{\sum_{t\le s}x_{l+2,t}}
\newcommand{\brksp}{\sum_{t\le s'}x_{k+1,t}}
\newcommand{\brkksp}{\sum_{t\le s'}x_{k+2,t}}
\def\Sz{\text{Sz }}
\begin{document}

\baselineskip=17pt

\title{A new metric invariant for Banach spaces}

\author{F. Baudier}

\address{Universit\'e de Franche-Comt\'e, Laboratoire de Math\'ematiques UMR 6623,
16 route de Gray, 25030 Besan\c con Cedex, FRANCE.}
\email{florent.baudier@univ-fcomte.fr}

\author{N. J. Kalton}
\address{Department of Mathematics \\
University of Missouri-Columbia \\
Columbia, MO 65211 }

\email{kaltonn@missouri.edu}

\author{G. Lancien}

\address{Universit\'e de Franche-Comt\'e, Laboratoire de Math\'ematiques UMR 6623,
16 route de Gray, 25030 Besan\c con Cedex, FRANCE.}
\email{gilles.lancien@univ-fcomte.fr}

\subjclass[2000]{46B20 (primary), 46T99 (secondary)}

\thanks{The first author acknowledges support from NSF grant DMS-0555670}

\begin{abstract}
We show that if the Szlenk index of a Banach space $X$ is larger than the first
infinite ordinal $\omega$ or if the Szlenk index of its dual is larger than
$\omega$, then the tree of all finite sequences of integers equipped with the
hyperbolic distance metrically embeds into $X$. We show that the converse is
true when $X$ is assumed to be reflexive. As an application, we exhibit new
classes of Banach spaces that are stable under coarse-Lipschitz embeddings and
therefore under uniform homeomorphisms.
\end{abstract}

\maketitle

\markboth{F. BAUDIER, N. J. KALTON AND G. LANCIEN}{A new metric invariant for
Banach spaces}

\section{Introduction}

In 1976 Ribe proved in \cite{Ribe1976} that two uniformly
homeomorphic Banach spaces are finitely representable in each other.
This theorem gave birth to the ``Ribe program'' (see
\cite{Bourgain1986} or \cite{MendelNaor2008} for a detailed
description). Local properties of Banach spaces are properties which
only involve finitely many vectors. These are properties which are
stable under finite representability. In view of Ribe's result the
``Ribe program'' aims at looking for metric invariants that
characterize local properties of Banach spaces. The first occurence
of the ``Ribe program'' is Bourgain's metric characterization of
superreflexivity given in \cite{Bourgain1986}. The metric invariant
discovered by Bourgain is the collection of the hyperbolic dyadic
trees of arbitrarily large height $N$. If we denote
$\Omega_{0}=\{\emptyset\}$, the root of the tree. Let
$\Omega_{i}=\{-1,1\}^{i}$, $B_{N}=\bigcup_{i=0}^{N}\Omega_{i}$. Thus
$B_N$ endowed with its shortest path metric $\rho$ is the hyperbolic
dyadic tree of height $N$.

Let us recall some definitions. Let $(M,d)$ and $(N,\delta)$ be two metric
spaces and let $f:M\to N$ be an injective map. The {\it distortion} of $f$ is
$$ {\rm dist}(f):= \|f\|_{Lip}\|f^{-1}\|_{Lip}=\sup_{x\neq y \in
M}\frac{\delta(f(x),f(y))}{d(x,y)}.\sup_{x\neq y \in
M}\frac{d(x,y)}{\delta(f(x),f(y))}.$$ If $\mbox{dist}(f)$ is finite, we say
that $f$ is a Lipschitz or metric embedding of $M$ into $N$. If there exists an
embedding $f$ from $M$ into $N$, with dist$(f)\leq C$, we use the notation $M
\buildrel {C}\over {\hookrightarrow} N$.

Bourgain's characterization is the following:

\begin{Thm} {\bf (Bourgain 1986)} Let $X$ be a Banach space. Then $X$ is not superreflexive if and only
if there exists a universal constant $C$ such that for all $N\in \Ndb$,
$(B_N,\rho)\buildrel {C}\over {\hookrightarrow} X$.
\end{Thm}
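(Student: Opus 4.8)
\medskip
\noindent\textbf{How I would prove this.} The equivalence splits into two implications of very unequal difficulty, and I would treat them separately.

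\medskip
\noindent\emph{Non-superreflexive $\Rightarrow$ uniform embeddings of the $B_N$'s.} The plan is to feed James's characterization of superreflexivity into an explicit formula. Fix $\theta<1$ and $N\in\Ndb$. Since $X$ is not superreflexive, James's finite tree property provides a ``$\theta$-James tree of height $N$'' in $X$: vectors $(x_t)_{t\in B_N}$ in the unit ball of $X$ and functionals $(x_t^{\ast})_{t\in B_N}$ in the unit ball of $X^{\ast}$ with $x_t^{\ast}(x_s)=\theta$ whenever $t$ is an initial segment of $s$ and $x_t^{\ast}(x_s)=0$ otherwise. Writing $t|j$ for the length-$j$ initial segment of $t$, set
$$f(t)=\sum_{j=1}^{|t|}x_{t|j}\in X ,\qquad f(\emptyset)=0.$$
Each summand has norm $\le 1$ and the common initial part of $t,s$ cancels, so $\|f(t)-f(s)\|\le\rho(t,s)$. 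Conversely, let $u=t\wedge s$ be the longest common initial segment, $r=|u|$; in the generic case in which $t,s$ are incomparable, put $w=t|(r+1)$, so $w$ is an initial segment of $t|j$ for every $j>r$ but of no $s|j$ with $j>r$. Then $x_w^{\ast}(f(t)-f(s))=\theta(|t|-r)$, hence $\|f(t)-f(s)\|\ge\theta(|t|-r)$; symmetrically $\ge\theta(|s|-r)$; adding the two gives $\|f(t)-f(s)\|\ge\tfrac{\theta}{2}\,\rho(t,s)$ (when $t$ is an ancestor of $s$ the same computation yields the better bound $\theta\,\rho(t,s)$). Thus $\mathrm{dist}(f)\le 2/\theta$, and letting $\theta\to1$ (James allows every $\theta<1$) produces an absolute constant, e.g. $C=3$, valid simultaneously for all $N$.

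\medskip
\noindent\emph{Uniform embeddings of the $B_N$'s $\Rightarrow$ non-superreflexive.} Here I argue by contraposition: if $X$ is superreflexive I must show that no constant $C$ satisfies $(B_N,\rho)\buildrel{C}\over{\hookrightarrow}X$ for all $N$. By Enflo's theorem $X$ may be renormed to be uniformly convex, and by Pisier's theorem one may in addition take the modulus of convexity of power type $p$, $\delta_X(\e)\ge c\,\e^{p}$ for some $p\in[2,\infty)$ and $c>0$; equivalently $X$ satisfies a $p$-uniform convexity inequality on quadruples of points. The crux is then the quantitative lower bound $\mathrm{dist}((B_N,\rho),X)\gtrsim_{c,p}(\log N)^{1/p}$, which I would establish by Bourgain's iterated ``fork'' argument: given $f\colon B_N\to X$ normalized so that $\rho(t,s)\le\|f(t)-f(s)\|\le D\,\rho(t,s)$, run through the $\approx\log_2 N$ dyadic scales of the tree; at each scale select many vertex-disjoint ``forks'' (an apex together with its two descending subtrees down to leaves, the apex being a metric midpoint of the two leaves), observe that the metric forces each fork to become nearly degenerate after applying $f$ (the two leaf-images almost antipodal about the apex-image), convert this near-degeneracy through the $p$-convexity inequality into a fixed ``deficit'', and sum the deficits over all scales to reach a total of order $(\log N)\,D^{-p}$, which the diameter budget caps by a constant; hence $D\gtrsim(\log N)^{1/p}\to\infty$. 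Two comments: the uniformity in $N$ in the hypothesis is painlessly absorbed by replacing $X$ with an ultrapower $X_{\mathcal U}$ (finitely representable in $X$, hence superreflexive whenever $X$ is) into which the entire infinite hyperbolic tree embeds with distortion $C$; and in present-day language this lower bound is exactly the assertion that $p$-uniformly convex spaces are Markov $p$-convex with a uniform constant while the Markov $p$-convexity constant of $(B_N,\rho)$ grows like $(\log N)^{1/p}$. I expect this lower bound to be the only genuinely hard step: the forward implication is a one-line formula once James's theorem is invoked, whereas the converse rests on the self-improving multiscale estimate that is the technical heart of Bourgain's proof.
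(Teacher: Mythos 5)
This theorem is quoted by the paper from \cite{Bourgain1986}; no proof is given in the text, so there is no ``paper's own proof'' to compare against. Your plan is, in outline, Bourgain's original two-pronged argument, and it is also an honest template for what the paper \emph{does} do, in the asymptotic setting, in Sections~2 and~3: the positive embedding there is again built by choosing an (approximately) biorthogonal tree-indexed system and summing along branches, with the same split ``incomparable'' vs.\ ``comparable'' lower-bound estimate, and the non-embedding result (Theorem~\ref{converse}) is again a multiscale martingale-type argument, run in an ultraproduct, against a power-type convexity/smoothness inequality.

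Two caveats on your forward direction. First, what you call ``James's finite tree property'' is not literally the usual finite tree property (which gives a $\theta$-separated \emph{midpoint} tree $x_t=\frac12(x_{t\frown 1}+x_{t\frown -1})$ and no dual system at all). The biorthogonal configuration you need --- $(x_t)_{t\in B_N}\subset B_X$, $(x_t^*)_{t\in B_N}\subset B_{X^*}$ with $x_t^*(x_s)=\theta$ for $t\le s$ and $0$ otherwise --- does follow from James's \emph{sequential} criterion, but this requires an argument: e.g.\ take an infinite James sequence $(x_i,f_i)$ in an ultrapower, label $B_N\hookrightarrow\{1,\dots,2^{N+1}-1\}$ by a depth-first enumeration $\phi$ so that the descendants of $t$ occupy an interval $[a_t,b_t]$, set $y_t=x_{\phi(t)}$ and $g_t=\tfrac12(f_{a_t}-f_{b_t+1})\in B_{X^*}$; then $g_t(y_s)=\tfrac{\theta}{2}$ if $t\le s$ and $0$ otherwise. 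Second, because of that normalization ``any $\theta<1$'' is optimistic --- the construction naturally yields some fixed $\theta_0\in(0,1)$ --- but this is harmless, since any fixed $\theta_0>0$ already gives a universal distortion bound $2/\theta_0$, which is all the theorem asks for. Your Lipschitz computation with $f(t)=\sum_{j\le |t|}x_{t|j}$ is correct. The converse direction you sketch (renorm to power-type $p$-uniform convexity via Enflo--Pisier, then the $(\log N)^{1/p}$ distortion lower bound via Bourgain's iterated fork/martingale-difference estimate, or equivalently the Markov $p$-convexity of such spaces) is the right and essentially only known route; the ultrapower reduction you mention to absorb the uniformity in $N$ is also standard and correct.
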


It has been proved in \cite{Baudier2007} that this is also equivalent to the
metric embedding of the infinite hyperbolic dyadic tree $(B_\infty,\rho)$ where
$B_\infty=\bigcup_{N=0}^{\infty}B_N$.\\

We also recall that it follows from the Enflo-Pisier renorming theorem
(\cite{Enflo1972} and \cite{Pisier1975}) that superreflexivity is equivalent to
the existence of an equivalent uniformly convex and (or) uniformly smooth norm.

In the series of papers \cite{BourgainMilmanWolfson1986},
\cite{MendelNaor2007}, \cite{MendelNaor2008} local properties such
as linear type and linear cotype are deeply studied and other
occurrences of ``Ribe's program'' are given.

In a similar vein our paper is an attempt to investigate which
asymptotic properties admit a metrical characterization. Asymptotic
properties have been intensively studied in
\cite{JohnsonLindenstraussPreissSchechtman2002},
\cite{GodefroyKaltonLancien2001} and \cite{OdellSchlumprecht2006}
and we refer to \cite{KnaustOdellSchlumprecht1999} for a precise
definition of the asymptotic structure of a Banach space. The main
result of this paper is an analogue of Bourgain's theorem in the
asymptotic setting. Let us first introduce a few notation and
definitions. For a positive integer $N$, We denote
$T_N=\bigcup_{i=0}^N \Ndb^i$, where $\Ndb^0:=\{\emptyset\}$. Then
$T_\infty=\bigcup_{N=1}^\infty T_N$ is the set of all finite
sequences of positive integers. For $s\in T_\infty$, we denote by
$|s|$ the length of $s$. There is a natural ordering on $T_\infty$
defined by $s\le t$ if $t$ extends $s$. If $s\le t$, we will say
that $s$ is an {\it ancestor} of $t$. If $s \le t$ and $|t|=|s|+1$,
we will say that $s$ is the {\it predecessor} of $t$ and $t$ is a
{\it successor} of $s$ and we will denote $s=t^-$. Then we equip
$T_\infty$, and by restriction every $T_N$, with the hyperbolic
distance $\rho$, which is defined as follows. Let $s$ and $s'$ be
two elements of $T_\infty$ and let $u\in T_\infty$ be their greatest
common ancestor. We set
$$\rho(s,s')=|s|+|s'|-2|u|=\rho(s,u)+\rho(s',u).$$

We now define the asymptotic version of uniform convexity and uniform
smoothness that we will consider. Let $(X,\|\ \|)$ be a Banach space and
$\tau>0$. We denote by $B_X$ its closed unit ball and by $S_X$ its unit sphere.
For $x\in S_X$ and $Y$ a closed linear subspace of $X$, we define
$$\overline{\rho}(\tau,x,Y)=\sup_{y\in S_Y}\|x+\tau y\|-1\ \ \ \ {\rm and}\ \ \
\ \overline{\delta}(\tau,x,Y)=\inf_{y\in S_Y}\|x+\tau y\|-1.$$ Then
$$\overline{\rho}(\tau)=\sup_{x\in S_X}\ \inf_{{\rm
dim}(X/Y)<\infty}\overline{\rho}(\tau,x,Y)\ \ \ \ {\rm and}\ \ \ \
\overline{\delta}(\tau)=\inf_{x\in S_X}\ \sup_{{\rm
dim}(X/Y)<\infty}\overline{\delta}(\tau,x,Y).$$ The norm $\|\ \|$ is said to be
{\it asymptotically uniformly smooth} if
$$\lim_{\tau \to 0}\frac{\overline{\rho}(\tau)}{\tau}=0.$$
It is said to be {\it asymptotically uniformly convex} if
$$\forall \tau>0\ \ \ \ \overline{\delta}(\tau)>0.$$
These moduli have been first introduced by Milman in \cite{Milman1971}.

We can now state the main result of our paper in a way that is clearly an
asymptotic analogue of Bourgain's theorem.

\begin{Thm}\label{main} Let $X$ be a reflexive Banach space. The following assertions are
equivalent.

(i) There exists $C\ge 1$ such that $T_\infty \buildrel {C}\over
{\hookrightarrow} X$.

(ii) There exists $C\ge 1$ such that for any $N$ in $\Ndb$, $T_N \buildrel
{C}\over {\hookrightarrow} X$.

(iii) $X$ does not admit any equivalent asymptotically uniformly smooth norm
\underline{or} $X$ does not admit any equivalent asymptotically uniformly
convex norm.
\end{Thm}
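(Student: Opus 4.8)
The implication (i)$\Rightarrow$(ii) is immediate, since every $T_N$ embeds isometrically into $T_\infty$. For the rest, the plan is to establish (ii)$\Rightarrow$(iii) and (iii)$\Rightarrow$(ii), together with the equivalence of (ii) and (i); the passage from uniform embeddings of the finite trees $T_N$ to a single embedding of $T_\infty$ will be handled exactly as in the dyadic case of \cite{Baudier2007}. It is convenient first to restate (iii) in terms of the Szlenk index: by the known renorming characterisation, $X$ admits an equivalent asymptotically uniformly smooth norm if and only if $\Sz(X)\le\omega$, and, using reflexivity, $X$ admits an equivalent asymptotically uniformly convex norm if and only if $X^*$ admits an equivalent asymptotically uniformly smooth norm, i.e.\ if and only if $\Sz(X^*)\le\omega$. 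Thus (iii) amounts to: $\Sz(X)>\omega$ or $\Sz(X^*)>\omega$.

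For (iii)$\Rightarrow$(ii) suppose first $\Sz(X)>\omega$ (we may assume $X$ separable). Then there is $\varepsilon>0$ such that the $\varepsilon$-Szlenk derivation of $B_{X^*}$ does not terminate in finitely many steps, which for each $N$ yields a weak${}^*$-null tree $(x^*_t)_{t\in T_N}$ in $B_{X^*}$ with $x^*_t=w^*\text{-}\lim_k x^*_{t\frown k}$ and $\|x^*_{t\frown k}-x^*_t\|>\varepsilon$. Write $z^*_t=x^*_t-x^*_{t^-}$ for $t\neq\emptyset$, pick $u_t\in S_X$ with $z^*_t(u_t)>\varepsilon$, and, using weak compactness of $B_X$, replace the children of each node by a subsequence along which the vectors $u_t$ converge weakly, say to $v_{t^-}\in B_X$, so that $\tilde u_t:=u_t-v_{t^-}$ is semi-normalised and weakly null along each set of siblings; by a further finite pruning one may also insist that all the off-diagonal pairings $z^*_r(\tilde u_q)$ and $z^*_r(v_q)$ with $r\neq q$ are arbitrarily small, say of size at most $\varepsilon\,4^{-N}$. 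Define $f\colon T_N\to X$ by $f(\emptyset)=0$ and $f(t)=\sum_{\emptyset\neq r\le t}\tilde u_r$. The triangle inequality gives $\|f(s)-f(s')\|\le2\rho(s,s')$; and, writing $u=s\wedge s'$ and testing against $\phi=x^*_s-x^*_{s'}=\sum_{u<r\le s}z^*_r-\sum_{u<r\le s'}z^*_r$ (so $\|\phi\|\le2$), the value $\phi(f(s)-f(s'))$ is the sum of a diagonal part $\sum_{u<r\le s}z^*_r(\tilde u_r)+\sum_{u<r\le s'}z^*_r(\tilde u_r)\ge\tfrac{\varepsilon}{2}\rho(s,s')$ and an off-diagonal error of absolute value at most $\tfrac{\varepsilon}{4}\rho(s,s')$; hence $\|f(s)-f(s')\|\ge\tfrac{\varepsilon}{8}\rho(s,s')$ and $\operatorname{dist}(f)\le 16/\varepsilon$. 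This proves (ii), and then (i) follows by the finite-to-infinite argument of \cite{Baudier2007}. If instead $\Sz(X^*)>\omega$, one runs the same scheme with the weak${}^*$-null tree produced in $B_{X^{**}}$ and transports the embedding back down to $X$ using that $X$ is reflexive.

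For (ii)$\Rightarrow$(iii) I argue by contraposition: assume $X$ admits both an equivalent asymptotically uniformly smooth norm and an equivalent asymptotically uniformly convex norm; I claim $\sup_N\inf\{\operatorname{dist}(f):f\colon T_N\to X\}=\infty$. After upgrading the moduli to power type, fix an equivalent norm with $\overline\rho(\tau)\le C\tau^p$ and another with $\overline\delta(\tau)\ge c\tau^q$, where $1<q\le p<\infty$; these yield the familiar asymptotic $\ell_p$-upper and $\ell_q$-lower estimates for weakly null sequences. Given $f\colon T_N\to X$ with $f(\emptyset)=0$ and distortion $D$, use weak compactness of $B_X$ and a pruning to pass to a subtree, relabelled $T_N$, along which $(f(t\frown k))_k$ converges weakly at every node. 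In this subtree one then selects a branch together with suitably many branchings hanging from it and compares the images of the corresponding leaves: the $\ell_p$-upper estimate bounds how much these images can spread as one descends a single branch, whereas the $\ell_q$-lower estimate forces images lying in different sub-branches to stay far apart, and the hyperbolic distances in $T_N$ make the two requirements incompatible unless $D$ is at least of order $N^{1/q-1/p}$ (or $\log N$ when $p=q$). Letting $N\to\infty$ gives the claim.

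I expect the main obstacle to be this last argument: pinning down the exact finite configuration of vertices of $T_N$ on which asymptotic smoothness and asymptotic convexity genuinely compete, and controlling the constants so that the resulting lower bound for the distortion actually tends to infinity. A secondary technical point is the pruning in (iii)$\Rightarrow$(ii) that keeps all the off-diagonal pairings negligible, together with the transfer of the construction from the bidual to $X$ in the case $\Sz(X^*)>\omega$.
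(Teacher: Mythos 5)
Your translation of (iii) into Szlenk-index language and your construction for (iii)$\Rightarrow$(ii) follow the same lines as the paper's Section~2: you build a weak${}^*$-null tree in $B_{X^*}$ from the non-termination of the Szlenk derivation, pass to an (almost) biorthogonal system by a gliding-hump/pruning step, define $F(s)=\sum_{\emptyset\neq r\le s}\tilde u_r$, and estimate the lower Lipschitz bound by pairing against $x^*_s-x^*_{s'}$. This is essentially Propositions~2.2 and~2.3 together with Lemma~2.1 of the paper (minor discrepancy: your off-diagonal bound ``$\le\tfrac{\varepsilon}{4}\rho(s,s')$'' needs a slightly more careful count, since there can be up to $(2N)^2$ cross terms, but this is fixed by choosing the pruning tolerance $o(N^{-2})$ exactly as the paper chooses $\delta<\tfrac{1}{24N^2}$). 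The case $\Sz(X^*)>\omega$ via the bidual and reflexivity also matches the paper's remark before Proposition~2.5. The passage from finite to infinite trees by the gluing technique of \cite{Baudier2007} is indeed what Theorem~2.4 does.

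The genuine gap is in (ii)$\Rightarrow$(iii), and you point at it yourself. Your plan --- prune so that $(f(t\frown k))_k$ converges weakly at each node, then play the asymptotic $\ell_p$-upper estimate along a branch against an asymptotic $\ell_q$-lower estimate across branchings --- does not yet identify the configuration on which the two estimates actually clash. The difficulty is that the consecutive increments $f(s_{|j})-f(s_{|j-1})$ along a single branch are \emph{not} weakly null relative to one another, so neither moduli estimate applies to them directly; the weakly null objects are the differences $f(t\frown k)-v_t$ from the ``fiberwise'' weak limits $v_t$, and to use both upper and lower estimates one must work with the full lattice of iterated weak limits over all coordinates simultaneously. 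The paper's Section~3 handles this with a specific mechanism which your sketch does not contain: it forms an ultraproduct $\mathcal X$ of $X$ indexed by $\mathbb N^N$, defines a martingale-type filtration $\mathcal E_0\le\mathcal E_1\le\cdots\le\mathcal E_N$ of weak-limit projections, decomposes each branch increment $z_j$ into dyadic blocks $w_{jk}=(\mathcal E_{j-a^{k-1}}-\mathcal E_{j-a^k})z_j$, and obtains the contradiction by comparing the global quantity $\sum_{j,k}\Vert w_{jk}\Vert$ from above (via the $\ell_p$-upper estimate applied to the block decomposition along a branch, giving $\le Cm^{1/p}N$) and from below (via the $\ell_q$-lower estimate together with the biLipschitz lower bound across branches at scale $a^k$, giving $\ge mN/2$), which is incompatible once $m>(2C)^q$. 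This is not an incremental computation on top of your sketch but a different, and essential, idea; without it, or an equivalent replacement, the implication (ii)$\Rightarrow$(iii) is unproved.
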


The main tool for our proof will be the so-called {\it Szlenk index}. We now
recall the definition of the Szlenk derivation and the Szlenk index that have
been first introduced in \cite{Szlenk1968} and used there to show that there is
no universal space for the class of separable reflexive Banach spaces. So
consider a real separable Banach space $X$ and $K$ a weak$^*$-compact subset of
$X^*$. For $\eps>0$ we let $\cal V$ be the set of all relatively weak$^*$-open
subsets $V$ of $K$ such that the norm diameter of $V$ is less than $\eps$ and
$s_{\eps}K=K\setminus \cup\{V:V\in\cal V\}.$ We define inductively
$s_{\eps}^{\alpha}K$ for any ordinal $\alpha$, by
$s^{\alpha+1}_{\eps}K=s_{\eps}(s_{\eps}^{\alpha}K)$ and
$s^{\alpha}_{\eps}K={\displaystyle \cap_{\beta<\alpha}}s_{\eps}^{\beta}K$ if
$\alpha$ is a limit ordinal.  Then we define $Sz(X,\eps)$ to be the least
ordinal $\alpha$ so that $s_{\eps}^{\alpha}B_{X^*}=\emptyset,$ if such an
ordinal exists. Otherwise we write $Sz(X,\eps)=\infty.$ The {\it Szlenk index}
of $X$ is finally defined by $Sz(X)=\sup_{\eps>0}Sz(X,\eps).$

\noindent We denote $\omega$ the first infinite ordinal and
$\omega_1$ the first uncountable ordinal. Note that the dual of a
separable Banach space $X$ is separable if and only if
$Sz(X)<\omega_1$ (this is a consequence of Baire's theorem on the
pointwise limit of sequences of continuous functions). We will
essentially deal with the condition $Sz(X)\le \omega$. The
weak$^*$-compactness of $B_{X^*}$ implies that this is equivalent to
the condition: $Sz(X,\eps)<\omega$, for all $\eps>0$. Besides, it
follows from a theorem of Knaust, Odell and Schlumprecht
(\cite{KnaustOdellSchlumprecht1999}) that a separable Banach space
admits an equivalent asymptotically uniformly smooth norm if and
only if $Sz(X)\le \omega$. Then it is easy to see that for a
reflexive Banach space the condition $Sz(X^*)\le \omega$ is
equivalent to the existence of an equivalent asymptotically
uniformly convex norm on $X$. Therefore condition $(iii)$ in Theorem
(\ref{main}) is equivalent to

$$(iv)\ \ Sz(X)>\omega\ \  {\rm or}\ \  Sz(X^*)>\omega.$$

\medskip With this information at hand, we shall almost forget the formulations
in terms of renormings and work essentially with the notion of the
Szlenk index of a Banach space.

In order to have a complete view of the analogy between our result and
Bourgain's theorem, it is worth noting at this point that the superreflexivity
can be similarly characterized by the behavior  of an ordinal index. For a
given weak$^*$-compact convex subset $C$ of $X^*$ and a given $\eps>0$, let us
denote $\cal S$ be the set of all relatively weak$^*$-open slices $S$ of $C$
such that the norm diameter of $S$ is less than $\eps$ and
$d_{\eps}C=C\setminus \cup\{S:S\in\cal S\}$. We then define inductively
$d_\eps^\alpha(C)$ for $\alpha$ ordinal as before and $Dz(X,\eps)$ to be the
least ordinal $\alpha$ so that $d_{\eps}^{\alpha}B_{X^*}=\emptyset,$ if such an
ordinal exists. Otherwise we write $Dz(X,\eps)=\infty.$ Finally, the {\it
weak$^*$-dentability index} of $X$ is  $Dz(X)=\sup_{\eps>0}Dz(X,\eps).$ Then it
follows from \cite{Lancien1995} (see also the survey \cite{Lancien2006}) that
the following conditions are equivalent:

(i) $X$ is super-reflexive.

(ii) $Dz(X)\le \omega$.

(iii) $Dz(X^*)\le \omega$.

\medskip
Let us now describe the organization of this article. In Section 2
we give the construction of several embeddings and finally prove
that $T_\infty$ Lipschitz-embeds into $X$, whenever $Sz(X)>\omega$
or $Sz(X^*)>\omega$. In Section 3 we show the converse statement in
the reflexive case. This will conclude the proof of Theorem
\ref{main}. In the last section we describe a few applications of
our result to the stability of certain classes of Banach spaces
under coarse-Lipschitz embeddings or uniform homeomorphisms. The
main consequence of our work is that the class of all separable
reflexive spaces $X$ so that $Sz(X)\le \omega$ and $Sz(X^*)\le
\omega$ is stable under coarse-Lipschitz embeddings. It seems also
interesting to us that a metric invariant (the embeddability of
$T_\infty$ in this case) is used to prove stability results, whereas
the metric invariant is often looked after, when the class is
already known to be stable.

\section{Construction of the embeddings}\label{embeddings}

Before to start, we need to introduce more notation concerning our
trees. For $s=(s_1,\dots,s_n)$ and $t=(t_1,\dots,t_m)$ in
$T_\infty$, we denote
$$s\frown t=(s_1,\dots,s_n,t_1,\dots,t_m)\ \ {\rm and\ also}\ \ \emptyset\frown
t=t\frown\emptyset=t.$$

\noindent For $t\in T_\infty$ and $k\le |t|$, we denote $t_{|_k}$ the ancestor
of $t$ of length $k$.

\noindent For $s\le t$ in $T_\infty$, we denote $[s,t]=\{u\in T_\infty,\ s\le
u\le t\}$.

\noindent For $N$ in $\Ndb$ and $T\subset T_N$, we say that a map $\Phi: T_N\to
T$ is a {\it tree isomorphism} if $\Phi(T_N)=T$, $\Phi(\emptyset)=\emptyset$
and for all $s\in T_{N-1}$ and $n\in \Ndb$ $\Phi(s\frown n)=\Phi(s)\frown
k_{s,n}$ with $k_{s,n}\in \Ndb$ and $k_{s,n}<k_{s,m}$ whenever $n<m$. A subset
$T$  of $T_N$ is called a {\it full subtree} of $T_N$ if there exists a tree
isomorphism from $T_N$ onto $T$ or equivalently if $\emptyset \in T$ and for
all $s\in T\cap T_{N-1}$, the set of successors of $s$ that also belong to $T$
is infinite.

\medskip
We now begin with a very simple lemma.
\begin{Lem}\label{glidinghump}
Let $(x_n^*)_{n=0}^\infty$ be a weak*-null sequence in $X^*$ such that $\Vert
x_n^*\Vert\ge 1$ for all $n$ in $\Ndb$ and let $F$ be a finite dimensional
subspace of $X^*$. Then there exists a sequence $(x_n)_n$ in $B_X$ such that
for all $y^*\in F$, $y^*(x_n)=0$ and ${\liminf}\ x_n^*(x_n)\ge \frac{1}{2}$
\end{Lem}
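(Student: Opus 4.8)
The plan is to reduce the statement to a single estimate on distances in $X^*$. Since the conditions ``$y^*(x_n)=0$ for all $y^*\in F$'' simply say that $x_n$ lies in the closed, finite-codimensional subspace $Z:=\bigcap_{y^*\in F}\ker y^*$, and since for a fixed $n$ one has, by Hahn--Banach, $\sup\{x_n^*(x):x\in B_X\cap Z\}=\|x_n^*|_Z\|={\rm dist}(x_n^*,Z^\perp)$, where $Z^\perp=\{w^*\in X^*:w^*|_Z=0\}$, the lemma will follow once we know that $Z^\perp=F$ and that $\liminf_n {\rm dist}(x_n^*,F)\ge\tfrac12$. The identity $Z^\perp=F$ is pure linear algebra: if $f_1,\dots,f_d$ is a basis of $F$, then any functional vanishing on $\bigcap_i\ker f_i$ is a linear combination of the $f_i$. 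Granting the distance estimate, I would then simply pick, for each $n$, a point $x_n\in B_X\cap Z$ with $x_n^*(x_n)\ge{\rm dist}(x_n^*,F)-\tfrac1n$ (taking $x_n=0$ when this distance is $0$); this $x_n$ satisfies $y^*(x_n)=0$ for all $y^*\in F$ and $\liminf_n x_n^*(x_n)\ge\liminf_n {\rm dist}(x_n^*,F)\ge\tfrac12$.

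So the real point is the gliding-hump estimate $\liminf_n {\rm dist}(x_n^*,F)\ge\tfrac12$. First, $(x_n^*)$ is bounded: being weak$^*$-convergent it converges pointwise on the Banach space $X$, so the uniform boundedness principle applies; write $M=\sup_n\|x_n^*\|$. Next, for each $n$ choose $y_n^*\in F$ realizing ${\rm dist}(x_n^*,F)$ (such a nearest point exists because $F$ is finite dimensional), and note $\|y_n^*\|\le\|x_n^*\|+\|x_n^*-y_n^*\|\le 2M$. Fix a subsequence along which ${\rm dist}(x_n^*,F)\to c:=\liminf_n {\rm dist}(x_n^*,F)$, and refine it once more (not relabelled) so that $y_n^*\to y^*$ in norm for some $y^*\in F$ --- this is where finite dimensionality of $F$ is used. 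Since $x_n^*\to 0$ weak$^*$ and $y_n^*\to y^*$ in norm, we get $x_n^*-y_n^*\to -y^*$ weak$^*$, so weak$^*$-lower semicontinuity of the norm gives $\|y^*\|\le\lim_n\|x_n^*-y_n^*\|=c$. Combining with the triangle inequality $1\le\|x_n^*\|\le\|x_n^*-y_n^*\|+\|y_n^*\|$ and letting $n\to\infty$ along the subsequence yields $1\le c+\|y^*\|\le 2c$, hence $c\ge\tfrac12$.

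There is no serious obstacle here --- the authors rightly call this ``a very simple lemma'' --- so the only things I would be careful about are the bookkeeping in the reduction (the Hahn--Banach duality $\sup_{B_X\cap Z}x_n^*={\rm dist}(x_n^*,F)$ and the identification $Z^\perp=F$) and making sure the $\liminf$ estimate is not damaged by the two successive passages to subsequences, so that the constant that comes out is indeed $\tfrac12$. It is worth noting that $\tfrac12$ cannot be improved: in $X=C[0,1]$ the sequence $x_n^*=\tfrac12(\delta_0-\delta_{1/n})$ is weak$^*$-null with $\|x_n^*\|=1$, while ${\rm dist}(x_n^*,\Rdb\delta_0)=\tfrac12$ for every $n$.
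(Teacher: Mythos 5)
Your proof is correct and follows essentially the same structure as the paper's: reduce via the pre-orthogonal $Z$ and Hahn--Banach duality $\sup_{B_X\cap Z}x_n^*=d(x_n^*,Z^\perp)=d(x_n^*,F)$ to the distance estimate $\liminf_n d(x_n^*,F)\ge\tfrac12$, then conclude. The only difference is that where the paper simply cites ``Mazur's technique for constructing basic sequences'' for that estimate, you supply the standard self-contained argument (uniform boundedness, compactness of bounded sets in the finite-dimensional $F$, weak$^*$-lower semicontinuity of the dual norm, and the triangle inequality), which is precisely what lies behind that reference; the remark on sharpness of the constant $\tfrac12$ is a nice addition but not required.
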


\begin{proof} It is a classical consequence of Mazur's technique for
constructing basic sequences (see for instance
\cite{LindenstraussTzafriri1977}), that $\liminf d(x_n^*,F)\ge \frac{1}{2}$.
Denote $E=\{x\in X\ \ \forall x^*\in F\ \ x^*(x)=0\}$ be the pre-orthogonal of
$F$. Since $F$ is finite dimensional, we have that $F=E^\perp$. Therefore, for
any $x^*\in X^*$, $d(x^*,F)=\|x^*_{|_{E}}\|_{E^*}$. This finishes the proof.
\end{proof}

Let now $X$ be a separable Banach space. It follows from the metrizability of
the weak$^*$ topology on $B_{X^*}$ that if Sz$(X,\eps)>\omega$ then, for all
$N\in \Ndb$ there exists $(y_s^*)_{s\in T_N}$ in $B_X^*$ such that for all
$s\in T_{N-1}$ and all $n\in \Ndb$, $\Vert y_{s\frown n}^* -y_s^* \Vert\ge
\eps/2:=\eps'$ and $y^*_{s\frown n}\stackrel{w*}{\to}y^*_s$.

\noindent It is an easy and well known fact that the map $\eps \mapsto {\rm
Sz}(X,\eps)$ is submultiplicative (see for instance \cite{Lancien 2006}). So,
if Sz$(X)>\omega$, then Sz$(X,\eps)>\omega$ for any $\eps\in (0,1)$. Therefore,
in the above choice of $(y^*_s)_{s\in T_N}$ we can take $\eps'=\frac{1}{3}$.

\noindent By considering $z^*_s=y^*_s-y^*_{s^-}$ for $s\neq \emptyset$,
$z^*_\emptyset=y^*_\emptyset$ and re-scaling, this is clearly equivalent to the
existence, for all $N\in \Ndb$ of $(z_s^*)_{s\in T_N}$ in $X^*$ so that
\begin{itemize}
\item $\forall s\in T_N\setminus \{\emptyset\}$,  $\Vert z_s^* \Vert\ge 1$,
\item $\forall s\in T_{N-1}$, $z^*_{s\frown n}\stackrel{w*}{\to}0$,
\item $\forall s\in T_N, \Vert \sum_{t\le s}z_t^*\Vert\le 3$.
\end{itemize}

In our next proposition, we improve the above statement by constructing an
almost biorthogonal system associated with $(z_s^*)_{s\in T_N}$.

\begin{Prop}\label{biorthogonal}
Let $X$ be a separable Banach space. If Sz$(X)>\omega$, then for all $N\in
\Ndb$ and $\delta>0$ there exist $(x_s^*)_{s\in T_N}$ in $X^*$ and $(x_s)_{s\in
T_N}$ in $B_X$ such that
\begin{itemize}
 \item $\forall s\in T_{N-1},\ \ x^*_{s\frown n}\stackrel{w*}{\to}0$,
 \item $\forall s\in T_N\setminus\{\emptyset\},\ \ \Vert x_s^*\Vert\ge 1$
 and $\forall s\in T_N,\ \ \Vert \sum_{t\le s}x_t^*\Vert\le 3$,
 \item $\forall s\in T_N,\ \ x_s^*(x_s)\ge\frac{1}{3}\|x^*_s\|$,
 \item $\forall s\neq t,\ \ |x_s^*(x_t)|<\delta$.
\end{itemize}

\end{Prop}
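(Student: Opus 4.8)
The plan is to start from the system $(z_s^*)_{s\in T_N}$ produced just before the statement (satisfying $\|z_s^*\|\ge 1$ for $s\ne\emptyset$, $z^*_{s\frown n}\stackrel{w*}{\to}0$ for each $s$, and $\|\sum_{t\le s}z_t^*\|\le 3$), set $x_s^*:=z_s^*$, and then build the companion vectors $(x_s)_{s\in T_N}$ one level at a time, top to bottom, so that the almost-biorthogonality $|x_s^*(x_t)|<\delta$ holds for $s\ne t$ while keeping $x_s^*(x_s)\ge\frac13\|x_s^*\|$. The third bullet is the only part that forces a choice of $x_s$ tied to $x_s^*$: pick first a norming functional $v_s\in S_X$ with $x_s^*(v_s)\ge\frac23\|x_s^*\|$ (if $x_\emptyset^*=0$ just take $x_\emptyset=0$), then perturb it by a small vector in the kernel of a large but finite-dimensional subspace of $X^*$ to kill the unwanted pairings; the $\frac13$ (rather than $\frac23$) leaves room for this perturbation.

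Concretely, I would induct on $k=|s|$ from $N$ down to $0$, but it is cleaner to do the construction by a single appeal to Lemma \ref{glidinghump} organized correctly. Fix an enumeration and process the vertices $s\in T_N$ so that the finitely many constraints ``$x_t^*(x_s)$ small'' for the already-chosen $x_t$ (there are only finitely many $t\in T_N$, so this is a finite list) are handled together with the constraints ``$x_s^*(x_u)$ small'' for $u$ yet to be chosen. The right way: for a fixed $s$, let $F_s$ be the (finite-dimensional) span of $\{x_t^*: t\in T_N\}\setminus\{x_s^*\}$ together with a Hahn–Banach functional witnessing $\|v_s+(\text{correction})\|\le 1$; apply Lemma \ref{glidinghump} to the weak$^*$-null sequence $x^*_{s\frown n}$ only where needed. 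Actually the slick route is: enumerate $T_N=\{s_1,\dots,s_M\}$ and choose $x_{s_j}$ inductively; having chosen $x_{s_1},\dots,x_{s_{j-1}}$, we need $x_{s_j}\in B_X$ with $x_{s_j}^*(x_{s_j})\ge\frac13\|x_{s_j}^*\|$, with $|x_t^*(x_{s_j})|<\delta$ for all $t\ne s_j$, and also — for the future steps — with $|x_{s_j}^*(x_t)|<\delta$ once $x_t$ is chosen later. The forward constraints on $x_{s_j}^*$ acting on future $x_t$ are the delicate ones; they are arranged by insisting that every future $x_t$ ($t\ne s_j$) be chosen in $\ker x_{s_j}^*$, which we can do because at each stage we only ever impose finitely many functionals to vanish on the newly chosen vector. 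So at stage $j$: let $G_{j}=\operatorname{span}\{x_{s_i}^*: i<j\}$ (these must be near-annihilated by $x_{s_j}$ up to $\delta$) — better, arrange exact vanishing by replacing $x_{s_j}$ by a perturbation; take $x_{s_j}$ close to a norming vector for $x_{s_j}^*$ but lying in $\ker G_j'$ for a suitable finite $G_j'$, using that $G_j'$ is finite-dimensional so the quotient map does not shrink the action of $x_{s_j}^*$ much, exactly as in the proof of Lemma \ref{glidinghump}.

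The main obstacle is bookkeeping the two-sided condition $|x_s^*(x_t)|<\delta$ for \emph{all} pairs $s\ne t$ simultaneously while only having, at each step, control over finitely many linear constraints — so one must be sure the perturbations are small enough (say each of size $<\delta/(3\cdot 3^N)$ or so) that they do not accumulate to spoil either the norm bound $\|\sum_{t\le s}x_t^*\|\le 3$ (this is automatic since we do not touch the functionals) or the biorthogonality across the whole finite tree $T_N$. Since $N$ is fixed and $T_N$ is finite, finitely many successive applications of (the argument behind) Lemma \ref{glidinghump}, each producing a correction vector of controlled norm, suffice; the weak$^*$-null property of $(x^*_{s\frown n})_n$ is preserved because passing to a subsequence in $n$ at each internal vertex is harmless. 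The $\delta$ can be made as small as desired by shrinking all perturbations, and the first two bullets are inherited verbatim from the $(z_s^*)$ system.
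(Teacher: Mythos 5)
There is a genuine gap. You fix the functionals from the outset (``set $x_s^*:=z_s^*$'') and then try to choose $x_{s_j}\in B_X$ that nearly norms $x^*_{s_j}$ while lying (up to $\delta$) in $\ker x^*_{s_i}$ for all $i<j$. For such an $x_{s_j}$ to exist you need $d\bigl(x^*_{s_j},\ \mathrm{span}\{x^*_{s_i}:i<j\}\bigr)\gtrsim \|x^*_{s_j}\|$, and this is simply not guaranteed for a \emph{fixed} family $(z^*_s)$: nothing in the hypotheses prevents $z^*_{s_j}$ from lying very close to the span of the previously processed $z^*_{s_i}$'s. The line ``using that $G_j'$ is finite-dimensional so the quotient map does not shrink the action of $x_{s_j}^*$ much, exactly as in the proof of Lemma~\ref{glidinghump}'' is where the argument breaks: Lemma~\ref{glidinghump} gives $\liminf_n d(x_n^*,F)\ge 1/2$ \emph{along a weak$^*$-null sequence with norms $\ge 1$}; it says nothing about a single prescribed functional being far from $F$. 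You gesture at ``passing to a subsequence in $n$ at each internal vertex,'' but you treat this as an innocuous bookkeeping step to preserve weak$^*$-nullity, not as the essential mechanism that makes the Mazur/gliding-hump estimate available.

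This is precisely what the paper's proof supplies that yours omits. The paper does \emph{not} keep the functionals fixed; it constructs a tree isomorphism $\Phi:T_N\to\Phi(T_N)\subset T_N$ and, at the step producing the new vertex $s_{k+1}=s_i\frown p$, invokes the weak$^*$-null convergence of the whole sequence $(z^*_{\Phi(s_i)\frown n})_n$ to pick $n$ large enough that (via Lemma~\ref{glidinghump}) one finds $z_{\Phi(s_i)\frown n}\in B_X$ nearly norming $z^*_{\Phi(s_i)\frown n}$, annihilating the previously chosen functionals exactly, and nearly annihilated by them by weak$^*$-nullity. That is, the new \emph{functional} is selected, along a weak$^*$-null sequence, to be far in norm from the span of the old ones; only then is the companion vector chosen. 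Re-indexing by $\Phi$ restores the tree structure and gives the first two bullets. To repair your argument you would need to make the passage to a full subtree explicit and move the choice of $n$ \emph{before} the choice of $x_{s_j}$ — at which point you recover the paper's construction.
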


\begin{proof} Let $f:\Ndb \to T_N$ be a bijection such that

$$\forall s<t \in T_N\ \ f^{-1}(s)<f^{-1}(t)$$ and
$$\forall s\in T_{N-1}\ \ \forall n<m\in \Ndb\ \  f^{-1}(s\frown n)<f^{-1}(s\frown
m).$$ Denote $s_i=f(i)$. In particular, $\emptyset=s_1$.

\noindent We now build inductively a tree isomorphism $\Phi: T_N\to
\Phi(T_N)\subset T_N$ and a family $(z_{\Phi(s)})_{s\in T_N}$ in $B_{X}$ such
that

\begin{equation}\label{biortho} z^*_{\Phi(s)}(z_{\Phi(s)})\ge \frac{1}{3},\ s \in T_N\  {\rm
and}\ |z^*_{\Phi(s)}(z_{\Phi(t)})|<\delta,\  s\neq t \in T_N.
\end{equation}
So set $\Phi(\emptyset)=\emptyset$, pick $z_{\Phi(\emptyset)}$ in
$B_X$ so that $z^*_{\Phi(\emptyset)}(z_{\Phi(\emptyset)})\ge
\frac{1}{3}\|z^*_{\Phi(\emptyset)}\|$ and assume that
$\Phi(s_1),\dots,\Phi(s_k)$ and $z_{\Phi(s_1)},\dots,z_{\Phi(s_k)}$
have been constructed accordingdly to (\ref{biortho}). Then, there
exists $i\in \{1,\dots,k\}$ and $p\in \Ndb$ such that
$s_{k+1}=s_i\frown p$. Since $(z^*_{\Phi(s_i)\frown n})_{n\ge 1}$ is
a weak$^*$-null sequence, Lemma \ref{glidinghump} insures that we
can pick $n\in \Ndb$ and $z_{\Phi(s_i)\frown n}$ in $B_X$ such that
$|z^*_{\Phi(s_i)\frown n}(z_{\Phi(s_j)})|<\delta$ for all $j\le k$,
$z^*_{\Phi(s_j)}(z_{\Phi(s_i)\frown n})=0$ for all $j\le k$ and
$z^*_{\Phi(s_i)\frown n}(z_{\Phi(s_i)\frown n})\ge \frac{1}{3}$. We
now set $\Phi(s_{k+1})=\Phi(s_i)\frown n$. If $n$ is chosen large
enough all the required properties, including those needed for
making $\Phi$ a tree isomorphism, are satisfied.

\noindent We conclude the proof by setting $x^*_s=z^*_{\Phi(s)}$ and
$x_s=z_{\Phi(s)}$, for $s$ in $T_N$.

\end{proof}

We shall improve progressively our embedding results and start with the
following.

\begin{Prop}\label{finitetrees} There is a universal constant $C\ge 1$ such that, whenever
$X$ is a separable Banach space with Sz$(X)>\omega$, we have that
$$\forall N\in \Ndb\ \ \ T_N\buildrel {C}\over {\hookrightarrow}X \ \
{\sl and}\ \ T_N\buildrel {C}\over {\hookrightarrow} X^*.$$
\end{Prop}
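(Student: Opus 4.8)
The plan is to use the almost-biorthogonal system $(x_s^*)_{s\in T_N}$, $(x_s)_{s\in T_N}$ produced by Proposition \ref{biorthogonal}, with $\delta$ chosen very small (say $\delta < 1/(100 (N+1))$, or more carefully small depending only on the desired universal constant via a summability trick, so that the "error" terms coming from non-orthogonality are negligible). First I would define the map $\varphi: T_N \to X$ by the partial-sum formula
\[
\varphi(s) = \sum_{t \le s} x_t,
\]
i.e. sending a node to the sum of the vectors attached to the nodes on the branch from the root to $s$. This is the natural analogue of the standard embedding of the dyadic tree into $\ell_1$-like spaces, and one should expect $\rho(s,s') = |s| + |s'| - 2|u|$ (where $u$ is the greatest common ancestor) to be comparable to $\|\varphi(s) - \varphi(s')\|$.

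The upper (Lipschitz) estimate is the easy direction: if $u$ is the common ancestor of $s$ and $s'$ then $\varphi(s) - \varphi(s') = \sum_{u < t \le s} x_t - \sum_{u < t \le s'} x_t$, a sum of $\rho(s,s')$ vectors each in $B_X$, so $\|\varphi(s)-\varphi(s')\| \le \rho(s,s')$. The lower estimate is where the biorthogonality is used. Given $s,s'$ with common ancestor $u$, I would test $\varphi(s)-\varphi(s')$ against a suitable functional built from the $x_t^*$'s. The cleanest choice: to bound $\|\varphi(s)-\varphi(s')\|$ from below by (a constant times) $\rho(s,s')$, pick the "deeper" of the two branches past $u$; say $|s|-|u| \ge |s'|-|u|$. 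Then apply the functional $\frac{1}{3}\big(\sum_{u < t \le s} x_t^*\big)$ — which has norm at most $1$ by the third bulleted property of Proposition \ref{biorthogonal} (the partial sums $\|\sum_{t\le s} x_t^*\|\le 3$, and subtracting the prefix $\sum_{t\le u}x_t^*$ is also controlled, giving norm $\le 2$, so after dividing by $3$ we are fine). Evaluating: the diagonal terms $x_t^*(x_t) \ge 1/3$ contribute roughly $\frac{1}{3}\cdot\frac{1}{3}(|s|-|u|)$, while all cross terms $x_t^*(x_r)$ with $t\ne r$ are bounded by $\delta$ in absolute value, and there are at most $(N+1)^2$ of them, contributing at most $(N+1)^2\delta$ in total — made negligible by the choice of $\delta$. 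The terms from $\varphi(s')$ (i.e. $x_t^*(x_r)$ for $t$ on the $s$-branch past $u$ and $r$ on the $s'$-branch past $u$, which are genuinely distinct nodes) are also all cross terms, hence $\le \delta$ each. So the functional detects at least $c(|s|-|u|) \ge \frac{c}{2}\rho(s,s')$ for a universal $c$.

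The main obstacle — and the point that needs care — is making the constant $C$ \emph{universal}, i.e. independent of $N$, despite the fact that there are $\Theta(N^2)$-many cross terms whose cumulative error we must dominate. The naive bound $\delta \le 1/(N+1)^2$ is allowed since Proposition \ref{biorthogonal} lets us choose $\delta$ after $N$, so in fact there is no real obstruction here; but to keep the exposition clean I would instead invoke Proposition \ref{biorthogonal} with a single summable choice such as $\delta = \delta_N$ small, and track that the total perturbation stays below, say, $1/100$ uniformly. Once the lower bound $\|\varphi(s)-\varphi(s')\| \ge c\,\rho(s,s')$ and upper bound $\|\varphi(s)-\varphi(s')\| \le \rho(s,s')$ are established with $c$ universal, we get $T_N \hookrightarrow X$ with distortion $1/c =: C$ universal. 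Finally, to get $T_N \buildrel{C}\over\hookrightarrow X^*$ as well: note that the hypothesis is on $Sz(X)$, not $Sz(X^*)$; the symmetric statement for $X^*$ should follow by the same construction applied to a weak$^*$-null tree of functionals in $X^{**}$ obtained from the weak$^*$-null structure already in hand — concretely, one can run the argument with the roles adjusted, using that the $(x_s^*)$ themselves, viewed inside $X^*$ with the $(x_s)$ playing the role of "evaluation functionals", give an embedding $s \mapsto \sum_{t\le s} x_t^*$ of $T_N$ into $X^*$ by exactly the same diagonal-versus-cross-term computation (the property $\|\sum_{t\le s}x_t^*\|\le 3$ gives the upper bound, and testing against $\frac{1}{3}\sum_{u<t\le s} x_t \in 3B_X$... wait, against the normalized branch sum of the $x_t$'s, using $x_t^*(x_t)\ge \frac13$ and $|x_t^*(x_r)|<\delta$, gives the lower bound). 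So the \emph{same} biorthogonal system yields both embeddings simultaneously with the same universal constant.
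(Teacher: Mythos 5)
Your construction for $T_N\hookrightarrow X$ is essentially the paper's: the map $s\mapsto\sum_{t\le s}x_t$ is $1$-Lipschitz, and testing against (a multiple of) a branch sum of the $x_t^*$'s gives a universal lower bound once $\delta$ is taken small relative to $N^2$. That part is correct.

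The second half is not: the map $G(s)=\sum_{t\le s}x_t^*$ that you propose for $T_N\hookrightarrow X^*$ cannot work. The third bulleted property of Proposition~\ref{biorthogonal} says precisely that $\|\sum_{t\le s}x_t^*\|\le 3$ for every $s\in T_N$, so your $G$ maps all of $T_N$ into $3B_{X^*}$. Its image has diameter at most $6$ while $T_N$ has diameter $2N$, so the distortion is at least of order $N$ and there is no universal $C$. (One also sees the problem in your lower-bound sketch: the functional $\tfrac13\sum_{u<t\le s}x_t$ has norm of order $d$ rather than order $1$, and after correcting for that you only recover a constant lower bound, not a linear one.) The moral is that while the $(x_t)$ behave like an $\ell_1$-type system — branch sums of the $x_t$ have norm that grows linearly, which is exactly what partial-sum embeddings need — the $(x_t^*)$ behave like a $c_0$-type system: their partial sums are uniformly bounded. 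The paper's fix is to take the \emph{iterated} partial sums. One sets $y_t^*=\sum_{r\le t}x_r^*$ (so each $y_t^*\in 3B_{X^*}$) and defines $G(s)=\sum_{t\le s}y_t^*$, which is the analogue of the usual embedding of a tree into $c_0$ via the summing basis. Now $G$ is $3$-Lipschitz, and for the lower bound one tests against $x_v$ with $v$ the first node past the common ancestor on the longer branch: every $y_t^*$ with $v\le t\le s$ ``contains'' $x_v^*$ and contributes $\ge\tfrac13-N\delta$, while the $y_t^*$ on the other side are pure cross terms contributing $\le N\delta$, yielding $\|G(s)-G(s')\|\ge\tfrac18\rho(s,s')$. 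This extra layer of summation is the essential idea you are missing.
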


\begin{proof} Let $(x^*_s,x_s)_{s\in T_N}$ be the system given by Proposition
\ref{biorthogonal}. Our choice of $\delta$, will be specified later.

\medskip
We shall first embed the $T_N$'s into $X$. For that purpose, we mimic the
natural embedding of $T_N$ into $\ell_1(T_N)$ (with $(x_t)_{t\in T_N}$ playing
the role of the canonical basis of $\ell_1(T_N)$) and define $F: T_N\to X$ by
$$\forall s\in T_N\ \ F(s)=\sum_{t\le s} x_t.$$
Since $(x_t)_{t\in T_N} \subset B_X$, we clearly have that $F$ is 1-Lipschitz
for the metric $\rho$ on $T_N$.

\noindent Let now $s\neq s'$ in $T_N$ and let $u$ be their greatest common
ancestor. Denote $d=\rho(u,s)$ and $d'=\rho(u,s')$. Recall that
$\rho(s,s')=d+d'$ and assume for instance that $d\ge d'$. Then
$$\langle \sum_{t\le s}x^*_t,F(s)-F(s')\rangle \ge \frac{1}{3}d-\delta |s|(d+d')\ge
\frac{d}{3}-2N^2\delta\ge \frac{1}{4}d\ge \frac{1}{8}\rho(s,s'),$$ if $\delta$
was chosen less than $\frac{1}{24N^2}$.

\noindent Since $\|\sum_{t\le s}x^*_t\|\le 3$, we obtain that for all $s,s'$ in
$T_N$:
$$\|F(s)-F(s')\|\ge\frac
{1}{24}\rho(s,s').$$ This finishes the  proof of our first embedding result.

\medskip We now turn to the question of embedding the $T_N$'s into $X^*$.

\noindent Our construction will copy the natural embedding of $T_N$ into
$c_0(T_N)$, with $(x^*_t)_{t\in T_N}$ replacing the canonical basis of
$c_0(T_N)$. For $s\in T_N$, we denote $y^*_s=\sum_{t\le s} x^*_t$. Then we
define $G:T_N \to X^*$ by
$$\forall s\in T_N\ \ G(s)=\sum_{t\le s} y_t^*.$$
Since $(y^*_t)_{t\in T_N}$ is a subset of $3B_{X^*}$, it is immediate that $G$
is 3-Lipschitz.

\noindent Let now $s\neq s'$ in $T_N$ and denote again $u$ their greatest
common ancestor, $d=\rho(u,s)$ and $d'=\rho(u,s')$. Assume for instance that
$d\ge d'$. Let us name $v$ the unique successor of $u$ such that $v\le s$ and
$w$ the unique successor of $u$ such that $w\le s'$ if it exists. Then
$$G(s)-G(s')=\sum_{v\le t\le
s}y^*_t -\sum_{w\le t\le s'}y^*_t.$$ If $s'\le s$, $[w,s']$ is empty.
Otherwise,
$$\forall t\in [w,s']\ \ \ |\langle x_v,y^*_t\rangle| \le \delta |t|\le \delta
N.$$ On the other hand
$$\forall t\in [v,s]\ \ \ |\langle x_v,y^*_t\rangle| \ge \frac{1}{3}-\delta (|t|-1)\ge \frac{1}{3} -\delta
N.$$ The two previous inequalities yield
$$\|G(s)-G(s')\|\ge |\langle x_v,G(s)-G(s')\rangle|\ge \frac{d}{3}-2\delta N^2\ge
\frac{d}{4}\ge \frac{1}{8}\rho(s,s'),$$ if $\delta$ was chosen in
$(0,\frac{1}{24 N^2})$. This concludes our argument for the second embedding.

\end{proof}

\begin{remark}Let us just finally notice that in both cases we proved the statement for
$C=24$, but our argument allows us to get the result for any constant $C>8$.
\end{remark}

\begin{remark} The end of this section will be devoted to various improvements of
Proposition \ref{finitetrees}, which are not fully needed in order
to read the last two sections.
\end{remark}

\medskip
We now turn to the problem of embedding $T_\infty$. We shall refine our
arguments in order to improve Proposition \ref{finitetrees} and obtain:

\begin{Thm}\label{infinitetree} There is a constant $C\ge 1$ such that for any separable Banach space
$X$ satisfying Sz$(X)>\omega$, we have $$T_\infty \buildrel {C}\over
{\hookrightarrow} X \ \ {\rm and}\ \ T_\infty \buildrel {C}\over
{\hookrightarrow} X^*.$$
\end{Thm}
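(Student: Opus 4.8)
The plan is to upgrade the mechanism behind Proposition~\ref{finitetrees}: instead of an almost‑biorthogonal system over each finite $T_N$ with a tolerance $\delta$ that has to shrink like $N^{-2}$, I want to produce a \emph{single} weak\*‑null almost‑biorthogonal system indexed by all of $T_\infty$ whose off‑diagonal errors decay geometrically with the depth of the later‑created node. Once such a system is available, the two embeddings of Proposition~\ref{finitetrees} are applied verbatim and the error terms become convergent geometric series, which makes the distortion bounds uniform over $T_\infty$.

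Concretely, I would fix a decreasing sequence $\delta_k\downarrow 0$ with $\sum_k k\,\delta_k$ as small as I like (say $\delta_k=4^{-k}$), enumerate $T_\infty=\{s_1,s_2,\dots\}$ compatibly with the partial order and with the ordering of successors, and arranged so that $|s_j|\le j$. Then I rerun the inductive scheme of Proposition~\ref{biorthogonal} over all of $T_\infty$, with one change: when the node $s_{k+1}$ is created, I use Lemma~\ref{glidinghump} (together with the weak\*‑nullity of the relevant continuation sequence) to obtain a vector in $B_X$ that is annihilated by all previously chosen functionals, almost norms the new functional, and in addition satisfies $|x^*_{s_{k+1}}(x_{s_j})|<\delta_{|s_{k+1}|}$ for every $j\le k$. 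Only finitely many constraints are imposed at each step, so this costs nothing. The intended output is a family $(x^*_s,x_s)_{s\in T_\infty}$ with $x^*_{s\frown n}\stackrel{w*}{\to}0$, $\|x^*_s\|\ge 1$ for $s\ne\emptyset$, $x^*_s(x_s)\ge\tfrac13$, $\|\sum_{t\le s}x^*_t\|\le 3$ along every branch, and $|x^*_s(x_t)|\le\delta_{\max(|s|,|t|)}$ whenever $s\ne t$ (the pairings in which the later node carries the annihilating vector being exactly $0$).

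Granting such a system, I would set $F(s)=\sum_{t\le s}x_t\in X$ and $G(s)=\sum_{t\le s}\big(\sum_{r\le t}x^*_r\big)\in X^*$, which are $1$‑Lipschitz and $3$‑Lipschitz exactly as in Proposition~\ref{finitetrees}. For the lower estimates, given $s\ne s'$ with greatest common ancestor $u$, with $v$ the successor of $u$ below $s$ and $d=\rho(u,s)\ge d'=\rho(u,s')$, one tests $F(s)-F(s')$ against $\sum_{u<r\le s}x^*_r$ (of norm $\le 6$) and $G(s)-G(s')$ against $x_v$. The diagonal contribution is at least $\tfrac13 d$ because every $x^*_t(x_t)\ge\tfrac13$, while the error contribution, after grouping the pairs $(r,t)$ according to $\max(|r|,|t|)$ and noting there are only boundedly many pairs at each level, is at most a constant times $\sum_k k\,\delta_k<\tfrac{d}{12}$. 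Hence $\|F(s)-F(s')\|\ge c\,\rho(s,s')$ and $\|G(s)-G(s')\|\ge c\,\rho(s,s')$ with a universal $c>0$, so $C$ can be taken universal; this is just the re‑run of the two estimates in Proposition~\ref{finitetrees}, with a convergent geometric series replacing the crude $N^2\delta$ bound.

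The genuine obstacle is the very first step: manufacturing the weak\*‑null almost‑biorthogonal system over the \emph{entire} infinite tree while keeping $\|x^*_s\|\ge 1$ for every $s$ and $\|\sum_{t\le s}x^*_t\|\le 3$ along every branch. This is exactly where $Sz(X)>\omega$ must be used in an essential way; by the submultiplicativity of $\eps\mapsto Sz(X,\eps)$ it guarantees $s_{\eps}^{n}B_{X^*}\ne\emptyset$ for every $n$ and every $\eps\in(0,1)$, and one has to thread the enumeration of $T_\infty$ so that at each stage there is still enough Szlenk depth available below the current node to continue indefinitely, reconciling this with the gliding‑hump requirements — which, crucially, only ever involve finitely many previously chosen vectors and functionals at a time. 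Maintaining simultaneously the uniform lower bound on the $\|x^*_s\|$, the uniform bound on the branch sums, the weak\*‑nullity of every successor sequence, and the geometric smallness of the cross terms, all along an infinite tree, is the delicate part; everything afterwards is the routine re‑run described above. (An alternative is to phrase the passage from $T_N$ to $T_\infty$ as a Baudier‑type self‑improvement, deducing the embedding of $T_\infty$ from the uniform embeddings of the $T_N$ produced by Proposition~\ref{finitetrees}.)
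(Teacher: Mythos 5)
Your main plan cannot work as stated, and the obstruction is not a technical delicacy but an actual impossibility. Suppose one had a \emph{single} family $(x^*_s)_{s\in T_\infty}$ with $x^*_{s\frown n}\stackrel{w^*}{\to}0$, $\|x^*_s\|\ge 1$ for $s\ne\emptyset$ and $\|\sum_{t\le s}x^*_t\|\le 3$ for every $s$. Put $y^*_s=\sum_{t\le s}x^*_t\in 3B_{X^*}$; then $y^*_{s\frown n}\stackrel{w^*}{\to}y^*_s$ and $\|y^*_{s\frown n}-y^*_s\|\ge 1$ for every $s$. Let $K$ be the weak$^*$-closure of $\{y^*_s:s\in T_\infty\}$. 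For any $\eps<1$, no $y^*_s$ lies in a relatively weak$^*$-open subset of $K$ of diameter $<\eps$, because its successors enter every such neighbourhood while staying at distance $\ge 1$. Hence $\{y^*_s\}\subset s_\eps K$, and since $s_\eps K$ is weak$^*$-closed, $s_\eps K=K$; iterating, $s_\eps^\alpha K=K\ne\emptyset$ for every ordinal $\alpha$. By monotonicity of the Szlenk derivation and scale-invariance this forces $Sz(X)=\infty$, i.e.\ $X^*$ non-separable. So your ``single system over all of $T_\infty$'' exists only in the Asplund-failing case, not under the mere hypothesis $Sz(X)>\omega$, which the theorem is precisely about (for instance when $Sz(X)=\omega^2$ and $X^*$ is separable). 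The hypothesis $Sz(X)>\omega$ only gives you, for each fixed $\eps<1$, a weak$^*$-branching $\eps'$-separated tree of every \emph{finite} depth; there is no way to ``thread the enumeration'' into an infinite one.

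Your parenthetical alternative is the right direction, and is essentially what the paper does, but it is not a black-box self-improvement either, and for two reasons. First, unlike the dyadic trees $B_N$, the tree $T_N$ is infinite already at depth one, so one cannot diagonalize a sequence of $C$-embeddings $T_N\hookrightarrow X$ into an embedding of $T_\infty$ by a compactness argument. Second, the Baudier--Lancien gluing $F(s)=\lambda_sF_k(s)+(1-\lambda_s)F_{k+1}(s)$ requires the lower estimate for $\|F(s)-F(s')\|$ to be obtained by testing against functionals from scale $l+1$ and $l+2$ applied to vectors coming from scales $k+1$ and $k+2$ with $k\le l$; this only works because the paper arranges the almost-biorthogonality \emph{across scales}: it builds, by a single enumeration of $\bigcup_{i\ge 0}\{i\}\times T_{2^i}$, a countable family of \emph{finite-depth} systems $(x^*_{i,s},x_{i,s})_{s\in T_{2^i}}$ satisfying $|x^*_{i,s}(x_{j,t})|<\delta_i$ for all $(i,s)\ne(j,t)$. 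Each finite system is legitimately produced by the Szlenk hypothesis; the cross-scale smallness is what Lemma~\ref{glidinghump} is used for at each inductive step. So the correct route is: one biorthogonal system per scale $2^i$ (each over a finite tree, so available), coherent across scales, then interpolate; not one system over the whole infinite tree.
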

Although this statement implies our previous results, we have chosen to
separate its proof in the hope of making it easier to read.

\begin{proof} So assume that $Sz(X)>\omega$ and fix a decreasing sequence
$(\delta_i)_{i=0}^\infty$ in $(0,1)$. By combining the technique of Proposition
\ref{biorthogonal} and a proper enumeration of $\bigcup_{i=0}^\infty
\{i\}\times T_{2^i}$, one can actually build for every $i\ge 0$:
$(x_{i,s}^*)_{s\in T_{2^i}}$ in $X^*$ and $(x_{i,s})_{s\in T_{2^i}}$ in $B_X$
such that

(i) $\forall i\ge 0,\ \forall s\in T_{2^i-1},\ \ x^*_{i,s\frown
n}\stackrel{w*}{\to}0$,

(ii) $\forall i\ge 0,\ \forall s\in T_{2^i}\setminus\{\emptyset\},\ \ \Vert
x_{i,s}^*\Vert\ge 1$
 and \ $\forall s\in T_{2^i},\ \Vert \sum_{t\le s}x_{i,t}^*\Vert\le 3$,

(iii) $\forall i\ge 0,\ \forall s\in T_{2^i},\ \
x_{i,s}^*(x_{i,s})\ge\frac{1}{3}\|x^*_{i,s}\|$,

(iv) $\forall (i,s)\neq (j,t),\ \ |x_{i,s}^*(x_{j,t})|<\delta_i$.

\noindent Let us just emphasize the fact that the whole system
$(x_{i,s},x^*_{i,s})_{(i,s)}$ is almost biorthogonal. We wish also to note that
the estimate given in (iv) depends only on $i$. This last fact relies on a
careful application on Lemma \ref{glidinghump}.

\noindent For $i\ge 0$, we denote $F_i$ a translate of the map defined on
$T_{2^{i+1}}$ in the proof of Proposition \ref{finitetrees}. So let
$$F_i(\emptyset)=0\ \ {\rm and}\ \ F_i(s)=\sum_{\emptyset <t\le s} x_{i+1,t}\ \ \
s\in T_{2^{i+1}}\setminus\{\emptyset\}.$$ Now we adopt the gluing technique
introduced in \cite{Baudier2007} and also used in \cite{BaudierLancien2008} and
build our embedding as follows. For $s\in T_\infty\setminus\{\emptyset\}$ there
exists $k\ge 0$ such that $2^k\le |s|< 2^{k+1}$. We define
$$F(s)=\lambda_s
F_k(s)+(1-\lambda_s)F_{k+1}(s),\ \ {\rm where}\ \
\lambda_s=\frac{2^{k+1}-|s|}{2^{k}}.$$ Of course, we set $F(\emptyset)=0$. We
clearly have that for all $s\in T_\infty$, $\|F(s)\|\le |s|$ and following the
proof of Theorem 2.1 in \cite{BaudierLancien2008} that $F$ is 9-Lipschitz.
Consider now $s\neq s'\in T_\infty\setminus\{\emptyset\}$ and assume for
instance that $1\leq |s'|\leq |s|$. Let $2^k\le \vert s'\vert\le 2^{k+1}$ and
$2^l\le \vert s\vert\le 2^{l+1}$, with $k\le l$. Then,
\begin{equation*}
\begin{split}
F(s)-F(s')=&\lambda_s\brls+(1-\lambda_s)\brlls\\
&-\left(\lambda_{s'}\brksp+(1-\lambda_{s'})\brkksp\right).
\end{split}
\end{equation*}
Let $u$ be the greatest common ancestor of $s$ and $s'$.\\ If we denote
$(\ast)=\langle \displaystyle\sum_{u<t\le
s}(x_{l+1,t}^*+x_{l+2,t}^*),F(s)-F(s')\rangle$, we get

\begin{equation*}
\begin{split}
(\ast) & \ge \lambda_s\frac{d}{3}+(1-\lambda_s)\frac{d}{3}\\
& -\delta_{l+1}(\lambda_s d(\vert s\vert-1)+(1-\lambda_s)d\vert
s\vert+
\lambda_{s'}d\vert s'\vert+(1-\lambda_{s'})d\vert s'\vert)\\
& -\delta_{l+2}((1-\lambda_s) d(\vert s\vert-1)+\lambda_s d\vert
s\vert+
                        \lambda_{s'}d\vert s'\vert+(1-\lambda_{s'})d\vert s'\vert)\\
& \ge \frac{d}{3}-2d\vert s\vert(\delta_{l+1}+\delta_{l+2})\\
& \ge \frac{d}{3}-2\cdotp2^{2l+2}(\delta_{l+1}+\delta_{l+2})\ge
\frac{d}{4}\ge \frac{\rho(s,s')}{8},
\end{split}
\end{equation*}
if the $\delta_i$'s were chosen small enough.

\noindent Since $\|\sum_{t\le s} x^*_{i,t}\|\le 3$ \ for all $i\ge 0$, we
obtain the following lower bound
$$\Vert F(s)-F(s')\Vert\ge \frac{\rho(s,s')}{96}.$$
If $s'=\emptyset\neq s'$, the argument is similar but simpler. This concludes
our proof.

\medskip In order to embed $T_\infty$ into $X^*$, we use exactly the same
technique. For $i\ge 0$ and $s\in T_{2^i}$ denote $y^*_{i,s}=\sum_{t\le s}
x^*_{i,t}$ and
$$G_i(\emptyset)=0\ \ {\rm and}\ \ G_i(s)=\sum_{\emptyset <t\le s} y^*_{i+1,t},\ \
s\in T_{2^{i+1}}\setminus\{\emptyset\}.$$ Then again, we set $G(\emptyset)=0$
and for $s\in T_\infty\setminus\{\emptyset\}$:
$$G(s)=\lambda_s
G_k(s)+(1-\lambda_s)G_{k+1}(s).$$ Following again the proof in
\cite{BaudierLancien2008}, we obtain first that $G$ is 27-Lipschitz.

\noindent Consider now $s\neq s'\in T_\infty$ such that for instance $0\leq
|s'|\leq |s|$, $2^l\le \vert s\vert\le 2^{l+1}$ and  $2^k\le \vert s'\vert\le
2^{k+1}$ with $k\le l$ or $s'=\emptyset$. Let $u$ be the greatest common
ancestor of $s$ and $s'$ and $v$ be the successor of $u$ such that $v\le s$. In
a very similar way, by evaluating $\langle x_{l+1,v}+x_{l+2,v},
G(s)-G(s')\rangle$, we can show that a proper choice for the $\delta_i$'s
implies that
$$\|G(s)-G(s')\|\ge \frac{\rho(s,s')}{16}.$$
This concludes the proof of this proposition.

\end{proof}

We will now study the condition ``Sz$(X^*)>\omega$''. We already know that if
Sz$(X^*)>\omega$, then $T_\infty$ Lipschitz embeds into $X^{**}$ and therefore,
when $X$ is reflexive, $T_\infty$ Lipschitz embeds into $X$. We will show how
to drop the reflexivity assumption in this statement. As before, we start with
finite trees.

\begin{Prop}\label{c0likefinite} There is a universal constant $C\ge 1$ such that, whenever
$X$ is a separable Banach space with Sz$(X^*)>\omega$, we have that
$$\forall N \in \Ndb,\ \ \  T_N\buildrel {C}\over {\hookrightarrow} X.$$
\end{Prop}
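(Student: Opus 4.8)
The plan is to transport the $c_0$-type construction from the second half of Proposition~\ref{finitetrees} into $X^{**}$ and then push the resulting \emph{finite} system down into $X$ by local reflexivity. First a reduction: if $Sz(X)>\omega$, then Proposition~\ref{finitetrees} already gives $T_N\buildrel{C}\over{\hookrightarrow}X$, so we may assume $Sz(X)\le\omega$. Then $Sz(X)<\omega_1$, hence $X^*$ is separable and the weak$^*$ topology on $B_{X^{**}}$ is metrizable. Since $Sz(X^*)>\omega$, the submultiplicativity of $\eps\mapsto Sz(X^*,\eps)$ noted in Section~\ref{embeddings} gives $Sz(X^*,\eps)>\omega$ for every $\eps\in(0,1)$, and the argument recalled in Section~\ref{embeddings} --- applied to $X^*$ in place of $X$, using $(X^*)^*=X^{**}$ and the weak$^*$-metrizability of $B_{X^{**}}$ --- produces, for each $N$, a family $(z^{**}_s)_{s\in T_N}$ in $X^{**}$ with $\|z^{**}_s\|\ge1$ for $s\ne\emptyset$, with $z^{**}_{s\frown n}\stackrel{w*}{\to}0$ in $\sigma(X^{**},X^*)$ for every $s$, and with $\|\sum_{t\le s}z^{**}_t\|\le3$.

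Next I would rerun the proof of Proposition~\ref{biorthogonal} one level up. The only point needing a check is the analogue of Lemma~\ref{glidinghump}: given a $\sigma(X^{**},X^*)$-null sequence $(z^{**}_n)$ in $X^{**}$ with $\|z^{**}_n\|\ge1$ and a finite-dimensional $F\subset X^{**}$, there exist $z_n\in B_{X^*}$ with $\langle y^{**},z_n\rangle=0$ for all $y^{**}\in F$ and $\liminf\langle z^{**}_n,z_n\rangle\ge\tfrac12$. This follows by the same Mazur-type argument: one obtains $\liminf d(z^{**}_n,F)\ge\tfrac12$ in $X^{**}$, and since $F$ is finite-dimensional, hence weak$^*$-closed in $X^{**}$, it coincides with the annihilator $E^{\perp}$ of its pre-annihilator $E={}^{\perp}F\subset X^*$, so that $d(z^{**}_n,F)=\|z^{**}_n|_E\|_{E^*}$ and one picks $z_n$ in the unit sphere of $E\subset X^*$ almost norming $z^{**}_n|_E$. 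With this in hand, the proof of Proposition~\ref{biorthogonal} carries over and delivers, for each $N$ and each $\delta>0$, the family $(z^{**}_s)_{s\in T_N}$ above together with $(z^*_s)_{s\in T_N}$ in $B_{X^*}$ satisfying $\langle z^{**}_s,z^*_s\rangle\ge\tfrac13$ and $|\langle z^{**}_s,z^*_t\rangle|<\delta$ for $s\ne t$. The essential gain is that the partners $z^*_s$ lie in $X^*$, not merely in $X^{***}$.

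Now I would push this finite system into $X$. Let $E_N=\mathrm{span}\{z^{**}_s:s\in T_N\}\subset X^{**}$ and $G_N=\mathrm{span}\{z^*_s:s\in T_N\}\subset X^*$, both finite-dimensional. By the principle of local reflexivity there is a linear map $T:E_N\to X$ with $\|T\|\le2$ and $\langle Te^{**},g\rangle=\langle e^{**},g\rangle$ for all $e^{**}\in E_N$ and $g\in G_N$. Put $z_s=Tz^{**}_s$. Then $\|z_s\|\le2\|z^{**}_s\|\le12$, $\|\sum_{t\le s}z_t\|=\|T(\sum_{t\le s}z^{**}_t)\|\le6$, $\langle z^*_s,z_s\rangle=\langle z^{**}_s,z^*_s\rangle\ge\tfrac13$, and $|\langle z^*_s,z_t\rangle|=|\langle z^{**}_t,z^*_s\rangle|<\delta$ for $s\ne t$. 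Thus $(z_s)_{s\in T_N}\subset X$ and $(z^*_s)_{s\in T_N}\subset B_{X^*}$ form, inside $X$, an almost biorthogonal system with uniformly bounded partial sums, and setting $F(s)=\sum_{t\le s}\bigl(\sum_{r\le t}z_r\bigr)$ I would copy verbatim the $c_0$-estimate from the second half of the proof of Proposition~\ref{finitetrees}: $F$ is $6$-Lipschitz, and for $s\ne s'$ with greatest common ancestor $u$ and $v$ the successor of $u$ lying below the longer of the two branches, evaluating $z^*_v$ on $F(s)-F(s')$ gives a lower bound $\ge\rho(s,s')/12$ provided $\delta<\tfrac1{12N}$. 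This yields $T_N\hookrightarrow X$ with distortion bounded by a universal constant, independently of $N$.

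The step I expect to be delicate is the second one and its use in the third: one must be sure that the biorthogonal partners extracted from the weak$^*$-null tree in $X^{**}$ can be chosen in $X^*$ (so that local reflexivity applies with $G_N$ as the test functionals), and one must invoke local reflexivity not merely to embed $E_N$ into $X$ but at the same time to keep the partial sums $\sum_{t\le s}z^{**}_t$ bounded after the transfer --- which is precisely why the pairing against $G_N$ has to be made exact rather than merely approximate. Everything else is a routine repetition of the computations of Section~\ref{embeddings}.
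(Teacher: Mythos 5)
Your proof is correct, and it takes a genuinely different route from the paper's. After running the biorthogonalization one star up to get $(z^{**}_s)_{s\in T_N}\subset X^{**}$ paired with $(z^*_s)_{s\in T_N}\subset B_{X^*}$ (your check that Lemma~\ref{glidinghump} lifts to $\sigma(X^{**},X^*)$-null sequences is sound, since a finite-dimensional $F\subset X^{**}$ is weak$^*$-closed and equals $({}^{\perp}F)^{\perp}$), you invoke the principle of local reflexivity on the two finite-dimensional spaces $E_N$ and $G_N$ to obtain a single operator $T:E_N\to X$ that transports the whole system into $X$ while preserving the pairing \emph{exactly}: $\langle z^*_s,Tz^{**}_t\rangle=\langle z^{**}_t,z^*_s\rangle$. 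This makes the almost-biorthogonality and the bound $\|\sum_{t\le s}Tz^{**}_t\|\le 6$ immediate, and the $c_0$-type estimate from Proposition~\ref{finitetrees} then goes through verbatim, with no case discussion.

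The paper instead uses Goldstine's theorem, picking one weak$^*$-approximant $y_s\in 3B_X$ of $y_s^{**}$ at a time, and the price is that the approximation is only controlled on the functionals $x^*_t$ for $t$ lying in a growing union of branches $\bigcup_{j\le i_s}\mathcal B_j$. That forces the case analysis (comparable nodes; $d'\ge 24d$ vs.\ $d'<24d$; tracking $i_v$ vs.\ $i_w$) that your argument avoids. For the finite trees $T_N$, your local-reflexivity route is cleaner and more conceptual. The trade-off worth noting is that the branch-enumeration device the paper develops here is precisely what it then refines to handle the infinite tree in Theorem~\ref{c0likeinfinite}, where local reflexivity cannot be applied to the entire (infinite-dimensional) system in one shot; so the two proofs of Proposition~\ref{c0likefinite} are genuinely parallel, but only the paper's scales directly to $T_\infty$.
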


\begin{proof} If $X^*$ is non separable, then Sz$(X)>\omega$ and our problem is settled by Proposition
\ref{finitetrees}. Thus we assume that $X^*$ is separable. Then, for a given
positive integer $N$ and a given $\delta>0$, Proposition \ref{biorthogonal}
provides us with $(x^*_s)_{s\in T_N}$ in $B_{X^*}$ and $(x_s^{**})_{s\in T_N}$
in $X^{**}$ such that

$\bullet\ \forall s\in T_{N-1},\ \ x^{**}_{s\frown
n}\stackrel{w*}{\to}0$,

$\bullet\ \forall s\in T_N\setminus\{\emptyset\},\ \ \Vert
x_s^{**}\Vert\ge 1$ and \ $\forall s\in T_N,\ \Vert \sum_{t\le
s}x_t^{**}\Vert\le 3$,

$\bullet\ \forall s\in T_N,\ \
x_s^{**}(x^*_s)\ge\frac{1}{3}\|x^{**}_s\|$,

$\bullet\ \forall s\neq t,\ \ |x_s^{**}(x^*_t)|<\delta$.

\noindent Let $\{s_i,\ i\in \Ndb\}$ be an enumeration of $\{s\in T_N,\ |s|=N\}$
and let $\cal B_i=\{t\in T_N,\ t\le s_i\}$ be the corresponding branches of
$T_N$.

For $s\in T_N$ denote $y^{**}_s=\sum_{t\le s} x^{**}_t$.

\noindent Let us now fix $\eta>0$. For a given $s\in T_N$, there is
a unique $i=i_s\in \Ndb$ such that $s\in \cal B_{i_s} \setminus \cal
B_{i_s-1}$. Then, we can pick $y_s$ in $X$ so that

\begin{equation}\label{choiceofy}\|y_s\|\le 3\ \ \ {\rm and}\ \ \ \forall t\in
\bigcup_{j=1}^{i_s}\cal B_j\ \ |\langle
x^*_t,y^{**}_s-y_s\rangle|<\eta.
\end{equation}
In particular
\begin{equation}\label{choiceofy2}
\forall t\le s\ \ \ |\langle x^*_t,y^{**}_s-y_s\rangle|<\eta.
\end{equation}
We now define $G:T_N \to X$ by
$$\forall s\in T_N\ \ G(s)=\sum_{t\le s} y_t.$$
Since $(y_t)_{t\in T_N}$ is a subset of $3B_X$, it is immediate that $F$ is
3-Lipschitz.

\noindent Let now $s\neq s'$ in $T_N$ and denote again $u$ their greatest
common ancestor, $d=\rho(u,s)$ and $d'=\rho(u,s')$, $v$ the successor of $u$ so
that $v\le s$ and $w$ the successor of $u$ so that $w\le s'$, if they exist.

\noindent Assume first that $s$ and $s'$ are comparable and for
instance that $s'\le s$. Then $u=s'$, $v$ exists, $w$ does not and
by (\ref{choiceofy2})

\begin{equation}\label{evaluate} \langle x^*_v, F(s)-F(s')\rangle \ge \langle x^*_v,\sum_{v\le t\le
s}y^{**}_t\rangle -\eta d \ge \frac{1}{4}d,
\end{equation}
for $\delta$ and $\eta$ chosen small enough.

\noindent Suppose now that $s$ and $s'$ are not comparable. Then $v$ and $w$
are defined and not comparable. Therefore $i_v\neq i_w$. For instance
$i_v<i_w$. We will then consider two cases.

\noindent (a) If $d'\ge 24d$. Then $\|G(s)-G(s')\|\ge \|\sum_{u<t\le
s'}y_t\|-3d$. From (\ref{evaluate}) it follows that
$$\|G(s)-G(s')\|\ge \frac{1}{4} d'-3d\ge \frac{1}{8}d'\ge
\frac{1}{16}\rho(s,s').$$

\noindent (b) Assume now that $d'<24 d$.

\noindent We clearly have that for all $t$ in $[v,s]\cup [w,s']$,
$i_t\ge i_v$ and therefore it follows from (\ref{choiceofy})
$$\forall
t \in [v,s]\cup [w,s'],\ \ |\langle
x^*_v,y^{**}_t-y_t\rangle|<\eta.$$ It follows that
$$\|G(s)-G(s')\|\ge \langle x^*_v,\sum_{v\le t\le
s}y^{**}_t -\sum_{w\le t\le s'}y^{**}_t\rangle - (d+d')\eta \ge \frac{1}{4}d\ge
\frac{1}{100}\rho(s,s'),$$ if $\delta$ and $\eta$ were beforehand carefully
chosen small enough.

\end{proof}

We now state the last result of this section.

\begin{Thm}\label{c0likeinfinite} There is a universal constant $C\ge 1$ such that, whenever
$X$ is a separable Banach space with Sz$(X^*)>\omega$, we have that
$T_\infty\buildrel {C}\over {\hookrightarrow} X.$
\end{Thm}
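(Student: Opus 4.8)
The plan is to upgrade Proposition \ref{c0likefinite} to the infinite tree in exactly the same way that Theorem \ref{infinitetree} upgrades Proposition \ref{finitetrees}, namely via the gluing technique of \cite{Baudier2007} and \cite{BaudierLancien2008}. First I would dispose of the case where $X^*$ is non-separable: then $\mathrm{Sz}(X)>\omega$ and Theorem \ref{infinitetree} already gives $T_\infty\hookrightarrow X$. So assume $X^*$ is separable. Fix a decreasing sequence $(\delta_i)_{i=0}^\infty$ and a decreasing sequence $(\eta_i)_{i=0}^\infty$ in $(0,1)$, to be specified at the end. By combining the inductive construction behind Proposition \ref{biorthogonal} (applied in $X^{**}$ with weak$^*$-null successor sequences) with a careful enumeration of $\bigcup_{i=0}^\infty\{i\}\times T_{2^i}$ — exactly as in the proof of Theorem \ref{infinitetree} — build, for every $i\ge 0$, families $(x^*_{i,s})_{s\in T_{2^i}}$ in $B_{X^*}$ and $(x^{**}_{i,s})_{s\in T_{2^i}}$ in $X^{**}$ satisfying the four bullets of Proposition \ref{c0likefinite} with the caveat that the biorthogonality defect in the fourth bullet, $|x^{**}_{i,s}(x^*_{j,t})|<\delta_i$, depends only on $i$; this last point, as noted after Theorem \ref{infinitetree}, comes from a careful application of Lemma \ref{glidinghump}.

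Next, for each $i\ge 0$ I would run the approximation step of Proposition \ref{c0likefinite} at level $i+1$: for $s\in T_{2^{i+1}}$ set $y^{**}_{i+1,s}=\sum_{t\le s}x^{**}_{i+1,t}$, enumerate the branches $\mathcal B^{(i+1)}_j$ of $T_{2^{i+1}}$, and for each $s$ pick $y_{i+1,s}\in X$ with $\|y_{i+1,s}\|\le 3$ and $|\langle x^*_{i+1,t},y^{**}_{i+1,s}-y_{i+1,s}\rangle|<\eta_{i+1}$ for every $t$ in the union of branches indexed up to $i_s$ (the Goldstine-type local weak$^*$ approximation used in inequality (\ref{choiceofy})). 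Define $G_i(\emptyset)=0$ and $G_i(s)=\sum_{\emptyset<t\le s}y_{i+1,t}$ on $T_{2^{i+1}}\setminus\{\emptyset\}$, each $3$-Lipschitz. Then glue exactly as in Theorem \ref{infinitetree}: for $s\in T_\infty\setminus\{\emptyset\}$ with $2^k\le|s|<2^{k+1}$ put
$$G(s)=\lambda_s G_k(s)+(1-\lambda_s)G_{k+1}(s),\qquad \lambda_s=\frac{2^{k+1}-|s|}{2^k},$$
and $G(\emptyset)=0$. The Lipschitz upper bound ($G$ is $9\cdot 3=27$-Lipschitz, or some absolute constant) follows verbatim from the computation in \cite{BaudierLancien2008}, since each $G_i$ is $3$-Lipschitz and the gluing coefficients are the same.

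For the lower bound, take $s\ne s'$ with $1\le|s'|\le|s|$, say $2^k\le|s'|<2^{k+1}$, $2^l\le|s|<2^{l+1}$, $k\le l$, let $u$ be their greatest common ancestor, $d=\rho(u,s)$, $d'=\rho(u,s')$, and let $v$ be the successor of $u$ with $v\le s$. I would test $G(s)-G(s')$ against the functional $x^*_{l+1,v}+x^*_{l+2,v}\in X^*$, just as Theorem \ref{infinitetree} tests against $x_{l+1,v}+x_{l+2,v}$ and Proposition \ref{c0likefinite} tests against $x^*_v$. Expanding $G(s)-G(s')$ through the convex combinations, the diagonal terms contribute roughly $d/3$ (using $x^{**}_{i,t}(x^*_{i,t})\ge\frac13$ and the approximation (\ref{choiceofy2}) to replace $y$'s by $y^{**}$'s, at a cost $O(d\,\eta_{l+1}+d\,\eta_{l+2})$), while the off-diagonal terms over the at most $2|s|\le 2^{l+2}$ nodes involved cost $O(2^{l+2}(\delta_{l+1}+\delta_{l+2}))$ per coordinate; the ancestors $t\le s'$ that are $\ge w$ (the other successor of $u$) contribute a controlled small error since their indices $i_t$ are $\ge i_v$, which is where the branch-dependent choice (\ref{choiceofy}) is essential. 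Choosing the $\delta_i$ and $\eta_i$ decaying fast enough (e.g. $\delta_i+\eta_i\le 2^{-3i}$) makes all these errors $\le d/12$, giving $\langle x^*_{l+1,v}+x^*_{l+2,v},G(s)-G(s')\rangle\ge d/4\ge\rho(s,s')/8$; since $\|x^*_{l+1,v}+x^*_{l+2,v}\|\le 2$ we get $\|G(s)-G(s')\|\ge\rho(s,s')/16$, and the degenerate case $s'=\emptyset$ is handled by the same estimate with $d'=0$. I expect the main obstacle to be bookkeeping the case split inherited from Proposition \ref{c0likefinite} (comparable versus incomparable $s,s'$, and within the incomparable case the dichotomy $d'\ge 24d$ versus $d'<24d$) \emph{simultaneously} with the two-level gluing of Theorem \ref{infinitetree}: one must check that the branch-index monotonicity $i_t\ge i_v$ survives the passage to the glued map and that the same functional $x^*_{l+1,v}+x^*_{l+2,v}$ does the job in every case. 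Everything else is a routine merge of the two proofs already in hand.
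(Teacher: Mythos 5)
There is a genuine gap, and the paper signals it explicitly: in the proof of Theorem~\ref{c0likeinfinite} the authors write that ``the gluing argument that we used before to embed $T_\infty$ does not seem to be efficient in this case'' and then ``develop another technique.'' Your proposal is precisely the gluing argument.

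Here is where it breaks. In the gluing scheme, $G(s)-G(s')$ is a convex combination of four full sums along the two root-to-node paths: $\sum_{t\le s}y_{l+1,t}$, $\sum_{t\le s}y_{l+2,t}$, $\sum_{t\le s'}y_{k+1,t}$, $\sum_{t\le s'}y_{k+2,t}$. You test against $x^*_{l+1,v}+x^*_{l+2,v}$. For this to work you must control $\langle x^*_{l+1,v},\,y_{k+1,t}\rangle$ and $\langle x^*_{l+1,v},\,y_{k+2,t}\rangle$ uniformly over all $t\le s'$ when $k<l$. The almost-biorthogonality $|\langle x^*_{l+1,v},\,y^{**}_{k+1,t}\rangle|<|t|\,\delta_{k+1}$ handles the bidual vectors, but the vectors you are actually summing are the weak$^*$ approximants $y_{k+1,t}\in X$, and the approximation $|\langle x^*_{k+1,t'},\,y^{**}_{k+1,t}-y_{k+1,t}\rangle|<\eta_{k+1}$ was taken only against functionals from the \emph{same} level $k+1$ (a fixed finite family depending on the branch index $i_t$). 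You cannot additionally demand smallness of $\langle x^*_{l+1,v},\,y^{**}_{k+1,t}-y_{k+1,t}\rangle$ for \emph{all future} levels $l>k$ and all admissible $v$: that is an infinite family of constraints, and a Goldstine/local-reflexivity approximant can only match finitely many. So the only available estimate is the trivial $|\langle x^*_{l+1,v},y_{k+1,t}\rangle|\le 3$, and summing over the up-to-$|s'|$ terms of $\sum_{t\le s'}y_{k+1,t}$ yields an error of order $3|s'|$, not $O(2^{l+2}\delta_{l+1})$ as asserted in your estimate. Whenever $|u|$ is close to $|s|$ (so $d$ is small) while $|s'|$ is large --- e.g.\ $|s|=2^l$, $|u|=2^l-1$, $|s'|=2^l-1$, hence $d=1$, $k=l-1$ --- this error swamps the signal $d/3$, and no decay of $(\delta_i)$, $(\eta_i)$ can save it. The same obstacle also wrecks the ``comparable versus incomparable, $d'\ge 24d$ or not'' case split you inherited from Proposition~\ref{c0likefinite}, because that split presupposed a single level.

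The paper's actual construction circumvents this by abandoning gluing in favour of a \emph{telescoping concatenation}: a base $K\ge 2$ is fixed, and the embedding writes $s=s_0\frown\cdots\frown s_n$ with $|s_j|=K^j$ and sends $s$ to a sum in which the $j$-th geometric block of the path is encoded with level-$j$ vectors only. Then $G(s)-G(s')$ uses level-$j$ vectors only for $j$ between the level $p$ of the common ancestor and the levels $m\le n$ of $s',s$, and the approximants $y_{s,i}$ are chosen (condition~(\ref{approx}) of the paper) to match $y^{**}_{s,i}$ against the finitely many functionals $x^*_{t,j}$ with $j\le i$ coming from the relevant branches of the \emph{past} levels. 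Low-level-$y$ against high-level-$x^*$ cross terms still occur, but their total mass is at most $O(N_{n-2})=O(K^{n-2})$, a $1/K$-fraction of the main term $K^{n-1}/3$; choosing $K$ large disposes of them. This geometric damping is exactly what the dyadic gluing (which is in effect $K=2$, and in addition spreads every ancestor of $s'$ across the high-level functional) cannot provide. So the proposal is not a routine merge of the two earlier proofs; it needs the new telescoping idea.
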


\begin{proof} Again, we may directly assume that $X^*$ is separable. The gluing
argument that we used before to embed $T_\infty$ does not seem to be efficient
in this case. We shall develop another technique. Fix first an integer $K\ge
2$. Then choose a decreasing sequence $(\delta_i)_i$ in $(0,1)$. Assuming that
Sz$(X^*)>\omega$ we can build

$(x_{i,s}^{**})_{s\in T_{K^i+1}}$ in $X^{**}$ and $(x^*_{i,s})_{s\in
T_{K^i+1}}$ in $B_{X^*}$ such that

$\bullet\ \forall i\ge 0,\ \forall s\in T_{K^i},\ \
x^{**}_{i,s\frown n}\stackrel{w*}{\to}0$,

$\bullet\ \forall i\ge 0,\ \forall s\in
T_{K^i+1}\setminus\{\emptyset\},\ \ \Vert x_{i,s}^{**}\Vert\ge 1$
 and \ $\forall s\in T_{K^i+1},\ \Vert \sum_{t\le s}x_{i,t}^{**}\Vert\le 3$,

$\bullet\ \forall i\ge 0,\ \forall s\in T_{K^i+1},\ \
x_{i,s}^{**}(x^*_{i,s})\ge\frac{1}{3}\|x^{**}_{i,s}\|$,

$\bullet\ \forall (i,s)\neq (j,t),\ \
|x_{i,s}^{**}(x^*_{j,t})|<\delta_i$.

For $s$ in $T_{K^i+1}$, we define $y_{i,s}^{**}=\sum_{t\le s}
x_{i,s}^{**}$.

\noindent Let $N_i=\sum_{k=0}^i K^k$, choose an enumeration
$\{s_r^i,\ r\in \Ndb\}$ of $\{s\in T_{N_i},\ |s|=N_i\}$ and denote
$\cal B_r^i=\{t\in T_{N_i}, t\le s_r^i\}$ the branch of $T_{N_i}$
whose endpoint is $s_r^i$. We will also use an enumeration
$\{t_r^i,\ r\in \Ndb\}$ of the terminal nodes of $T_{K^i+1}$ and the
corresponding branches $\cal C_r^i=\{t\in T_{K^i+1},\ t\le t_r^i\}$.

Let us first describe the general idea. We set $G(\emptyset)=0$.
Consider now $s\in T_\infty\setminus \{\emptyset\}$. Then, there
exists $n\in \Ndb$ and $s_0,\dots,s_n$ in $T_\infty$ such that
$|s_j|=K^j$ for $j\le n-1$, $1\le |s_n|\le K^n$ and
$s=s_0\frown\dots\frown s_n$. For $j\le n-1$,
$s_0\frown\dots\frown s_j$ is a terminal node of $T_{N_j}$
that we denote $s_{r_j}^j$. We shall now define
$$G(s)=\sum_{\emptyset<t\le s_0}y_{t,0}+\dots+\sum_{r_{j-1}\le t\le r_{j-1}\frown
s_j} y_{t,j}+\dots+\sum_{r_{n-1}\le t\le r_{n-1}\frown s_n}
y_{t,n}$$ where $y_{t,j}$ is a proper weak$^*$-approximation of
$y_{t,j}^{**}$.

We now detail the rather technical construction of the $y_{t,j}$'s.

\noindent So let $s=(s(1),\dots,s(k))\in T_{K^i+1}\setminus
\{\emptyset\}$. We recall that $s_{s(1)}^{i-1}$ is the $s(1)^{\rm
th}$ terminal node of $T_{N_{i-1}}$. So it can be written
$s_{s(1)}^{i-1}=s_0\frown\dots\frown s_{i-1}$, with
$|s_j|=K^j$ for $j\le i-1$. Then, for any $j\le i-1$,
$s_0\frown\dots\frown s_j$ is a terminal node of $T_{N_j}$
that we denote $s_{r_j}^j$. Besides, $r_{j-1}\frown s_j$ is a
terminal node of $T_{K^j+1}$ that we denote $t_{k_j}^j$. Let also
$k_i\in\Ndb$ be such that $s\in \cal C_{k_i}^i\setminus
\bigcup_{k=1}^{k_i-1} \cal C_k^i$. Then we pick $y_{s,i}$ in $3B_X$
satisfying the following conditions:

\begin{equation}\label{approx}\forall j\le i\ \  \forall t\in \bigcup_{k=1}^{k_j} \cal C_k^j\ \ \ |\langle
y^{**}_{s,i}-y_{s,i},x^*_{t,j}\rangle|\le \delta_i
\end{equation}

Since any $y_{s,i}$ belongs to $3B_X$, it is clear that $G$ is
3-Lipschitz.

\medskip
We now start a discussion to prove that $G^{-1}$ is Lipschitz. So
let $s\neq s'$ in $T_\infty\setminus \{\emptyset\}$ and $n,m$ non
negative integers so that $N_{n-1}< |s|\le N_n$ and $N_{m-1}<|s'|\le
N_m$ (with the convention $N_{-1}:=0$). As usual, $u$ is the
greatest common ancestor of $s$ and $s'$ and we denote $p$ the
integer such that $N_{p-1}<|u|\le N_p$, $d=\rho(u,s)$ and
$d'=\rho(u,s')$. So we can write $s=s_0\frown\dots\frown s_n$,
$s'=s'_0\frown\dots\frown s'_m$ and
$u=u_0\frown\dots\frown u_p$, with $|s_j|=K^j$ for $j\le n-1$,
$0<|s_n|\le K^n$, $|s'_j|=K^j$ for $j\le m-1$, $0<|s'_m|\le K^m$,
$|u_j|=K^j$ for $j\le p-1$ and $0<|u_p|\le K^p$. Then, we have that
$u_j=s_j=s'_j$ for $j\le p-1$ and that $u_p$ is the greatest common
ancestor of $s_p$ and $s'_p$ in $T_{K^p}$. Finally, if we denote
$s_0\frown\dots\frown s_j=s_{r_j}^j$ for $j\le n-1$ and
$s'_0\frown\dots\frown s'_j=s^j_{r'_j}$ for $j\le m-1$, we can
write
$$G(s)-G(s')=\sum_{r_{p-1}\frown u_p < t\le r_{p-1}\frown
s_p} y_{t,p}+\dots+\sum_{r_{n-1}\le t\le r_{n-1}\frown s_n}
y_{t,n}$$
$$-\sum_{r_{p-1}\frown u_p < t\le r_{p-1}\frown s'_p}
y_{t,p}-\dots-\sum_{r'_{m-1}\le t\le r'_{m-1}\frown s'_m}
y_{t,m}.
$$

\medskip {\bf a)} Assume first that $n\ge m+2$.

\noindent Denote $x^*=x^*_{r_{n-2},n-1}$. Then

\begin{equation*}
\begin{split}
&\|G(s)-G(s')\|\\
&\ge \langle x^*,\sum_{r_{n-2}\le t\le r_{n-2}\frown s_{n-1}}
y_{t,n-1} + \sum_{r_{n-1}\le t\le r_{n-1}\frown s_n} y_{t,n} \rangle
-
6N_{n-2}\\
&\ge \langle x^*,\sum_{r_{n-2}\le t\le r_{n-2}\frown s_{n-1}}
y^{**}_{t,n-1} + \sum_{r_{n-1}\le t\le r_{n-1}\frown s_n}
y^{**}_{t,n} \rangle -
\delta_{n-1}(K^{n-1}+K^n)- 6N_{n-2}\\
&\ge
\frac{1}{3}K^{n-1}-\delta_{n-1}\big(K^{n-1}+K^n+K^{n-1}K^{n-1}+K^n(K^n+1)\big)-6N_{n-2}
\ge \frac{1}{4}K^{n-1},
\end{split}
\end{equation*}
if $K$ was chosen big enough and the $\delta_n$'s small enough.

\noindent In that case $\rho(s,s')\le 2N_n$. So
$$\|G(s)-G(s')\|\ge \frac{\rho(s,s')}{L},$$ where $L$ is  a constant depending
only on $K$.

\medskip {\bf b)} Assume that $n=m+1$ and $m=p$.

\noindent Denote $x^*=x^*_{r_{n-1},n}$, $a=|s_n|$ and
$b=K^{n-1}-|u_{n-1}|$. Notice that $a+b=d$ and $d'\le b$. Then

\begin{equation}\label{eq1} \langle x^*,G(s)-G(s') \rangle \ge \frac{a}{4}- 3b-3d'\ge
\frac{a}{4}-6b,
\end{equation}
if the $\delta_n$'s were chosen small enough.

\noindent Let $v_{n-1}$ be the successor of $u_{n-1}$ so that
$v_{n-1}\le s_{n-1}$ and $w_{n-1}$ be the successor of $u_{n-1}$ so
that $w_{n-1}\le s'_{n-1}$. Denote now $y^*=x^*_{r_{n-2}\frown
v_{n-1}}$ and $z^*=x^*_{r_{n-2}\frown w_{n-1}}$.

\noindent Assume first that there exists an integer $k$ such that
$r_{n-2}\frown v_{n-1}\in \cal C_k^{n-1}$ and $r_{n-2}\frown
w_{n-1}\notin \bigcup_{l=1}^k \cal C_l^{n-1}$. Then, for small
enough $\delta_n$'s

\begin{equation}\label{eq2}
\langle y^*,G(s)-G(s') \rangle \ge \frac{b}{4}.
\end{equation}
It follows from (\ref{eq1}) and (\ref{eq2}) that
$$\langle x^*+ 25y^*,G(s)-G(s') \rangle \ge \frac{a+b}{4}\ge \frac{d+d'}{8}.$$
Thus $$\|G(s)-G(s')\|\ge \frac{\rho(s,s')}{208}.$$

\noindent Assume now that there exists an integer $k$ such that
$r_{n-2}\frown w_{n-1}\in \cal C_k^{n-1}$ and $r_{n-2}\frown
v_{n-1}\notin \bigcup_{l=1}^k \cal C_l^{n-1}$. Then, still for small
$\delta_n$'s,
$$\langle y^*,G(s)-G(s') \rangle \ge
\frac{b}{4}-3d'\ \ {\rm and}\ \ \langle z^*,G(s)-G(s') \rangle \ge
\frac{d'}{4}.$$ It follows from the above and (\ref{eq1}) that
$$\langle x^*+25 y^*+301 z^*,G(s)-G(s') \rangle \ge
\frac{d+d'}{4}\ \ {\rm and} \  \ \|G(s)-G(s')\|\ge
\frac{\rho(s,s')}{1308}.$$

\medskip {\bf c)} Assume that $n=m+1$ and $p\le m-1$.

\noindent Denote $x^*=x^*_{r_{n-1},n}$, $y^*=x^*_{r_{n-2},n-1}$ and
$z^*=x^*_{r'_{n-2},n-1}$. Note that $y^*\neq z^*$. We also denote
$a=|s_n|,\ b=|s_{n-1}|=K^{n-1}, \ b'=|s'_{n-1}|$ and

\noindent $c=|s_0\frown\dots\frown
s_{n-2}|-|u|=|s'_0\frown\dots\frown s'_{n-2}|-|u|$.

\noindent First, we have that for small enough $\delta_n$'s

\begin{equation}\label{eq3}
\langle x^*,G(s)-G(s') \rangle \ge \frac{a}{4}-3b-3b'-6c\ge
\frac{a}{4} -6b-6c.
\end{equation}
Assume first that  there exists an integer $k$ such that $r_{n-2}\in
\cal C_k^{n-1}$ and $r'_{n-2}\notin \bigcup_{l=1}^k \cal C_l^{n-1}$.
Then, for small enough $\delta_n$'s
$$\langle y^*,G(s)-G(s') \rangle \ge \frac{b}{4}-6c.$$
This, together with (\ref{eq3}) yields
$$\langle x^*+25 y^*,G(s)-G(s') \rangle \ge \frac{a+b}{4}-156c.$$
A previous choice of a big enough $K$ insures in this situation that
$$\frac{a+b}{4}-156 c\ge \frac{\rho(s,s')}{10}-156 c\ge
\frac{\rho(s,s')}{20}.$$ Therefore
$$\|G(s)-G(s')\|\ge \frac{\rho(s,s')}{520}.$$
Otherwise, there exists an integer $k$ such that $r'_{n-2}\in \cal
C_k^{n-1}$ and $r_{n-2}\notin \bigcup_{l=1}^k \cal C_l^{n-1}$. Then
a proper choice for the $\delta_n$'s yields
\begin{equation}\label{eq4}
\langle y^*,G(s)-G(s') \rangle \ge \frac{b}{4}-3b'-6c\ \ {\rm and}\
\ \langle z^*,G(s)-G(s') \rangle \ge \frac{b'}{4}-6c
\end{equation}
From (\ref{eq3}) and (\ref{eq4}) we deduce
$$\langle x^*+ 25y^*+300z^*,G(s)-G(s') \rangle \ge \frac{a+b}{4}-1956c.$$
Again, our starting choice of a very large $K$ will insure the
existence of a universal constant $L$ so that in this situation
$$\|G(s)-G(s')\|\ge \frac{\rho(s,s')}{L}.$$

\medskip {\bf d)} Assume that $n=m=p$. We just have to follow
the proof of Proposition \ref{c0likefinite}

\medskip {\bf e)} Assume that $n=m$ and $p\le n-2$.

\noindent Denote $y^*=x^*_{r_{n-2},n-1}$, $z^*=x^*_{r'_{n-2},n-1}$,
$a=|s_n|$, $a'=|s'_n|$, $b=|s_{n-1}|=|s'_{n-1}|=K^{n-1}$ and
$c=|s_0\frown\dots\frown
s_{n-2}|-|u|=|s'_0\frown\dots\frown s'_{n-2}|-|u|$.

\noindent It follows from the condition (\ref{approx}) and a proper
choice of the $\delta_n$'s that
$${\rm either} \ \ \langle y^*,G(s)-G(s') \rangle \ge \frac{b}{4}-6c\ \ \ {\rm
or}\ \ \ \langle z^*,G(s)-G(s') \rangle \ge \frac{b}{4}-6c.$$ If $K$
was chosen big enough we then obtain that
$$\|G(s)-G(s')\|\ge \frac{K^{n-1}}{8} \ge \frac{\rho(s,s')}{L},$$
for some universal constant $L$.

\medskip {\bf f)} Finally assume that $n=m$ and $p=n-1$.

\noindent Let $v_{n-1}$ be the successor of $u_{n-1}$ so that
$v_{n-1}\le s_{n-1}$ and $w_{n-1}$ be the successor of $u_{n-1}$ so
that $w_{n-1}\le s'_{n-1}$. Denote now $x^*=x^*_{r_{n-1},n}$,
$y^*=x^*_{r_{n-2}\frown v_{n-1}}$ and $z^*=x^*_{r_{n-2}\frown
w_{n-1}}$. We also denote $|a|=|s_n|$ and
$b=|s_0\frown\dots\frown
s_{n-1}|-|u|=|s'_0\frown\dots\frown s'_{n-1}|-|u|$.

\noindent First, we have
$$\|G(s)-G(s')\|\ge \|G(s')-G(u)\|-3d \ge \alpha d'-3d,$$
where $\alpha \in (0,1)$ is a universal constant given by case (b).
If $d'\ge Md$, with $M=\displaystyle{\frac{6}{\alpha}}$, we obtain
that
$$\|G(s)-G(s')\|\ge \frac{\alpha}{2}d'\ge \frac{\alpha \rho(s,s')}{4}.$$
So, we may as well assume that $d'<Md$. Now, with our usual careful
choice of small $\delta_n$'s we get
$$\langle x^*,G(s)-G(s') \rangle \ge \frac{a}{4}-6b\ \ \ {\rm and}$$
$${\rm either} \ \ \langle y^*,G(s)-G(s') \rangle \ge \frac{b}{4}\ \ \ {\rm
or}\ \ \ \langle z^*,G(s)-G(s') \rangle \ge \frac{b}{4}.$$ Then,
using $x^*+25y^*$ or $x^*+25z^*$, we obtain that
$$\|G(s)-G(s')\|\ge \frac{d}{104} = \frac{(M+1)d}{104(M+1)}\ge
\frac{d+d'}{104(M+1)}=\frac{\rho(s,s')}{104(M+1)}.$$

\medskip All possible cases have been considered and our discussion is
finished.

\end{proof}

\section{On the non-embeddability of the hyperbolic trees}

Our aim is now to prove in the reflexive case the converse of the results given
in the previous section. More precisely, the main result of this section is the
following.

\begin{Thm}\label{converse} Assume that $X$ is a separable reflexive Banach space
and that there exists $C\ge 1$ such that $T_N \buildrel {C}\over
{\hookrightarrow} X$ for all $N$ in $\Ndb$.

\noindent Then either Sz$(X)>\omega$ or Sz$(X^*)>\omega$.
\end{Thm}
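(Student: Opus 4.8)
The plan is to argue by contraposition: assume both $\Sz(X)\le\omega$ and $\Sz(X^*)\le\omega$ and show that the constants of embedding $T_N\hookrightarrow X$ cannot stay bounded. The two hypotheses give, by the Knaust--Odell--Schlumprecht theorem and its dual version already recalled in the introduction, that $X$ admits an equivalent asymptotically uniformly smooth norm \emph{and} an equivalent asymptotically uniformly convex norm; since changing to equivalent norms only affects the embedding constant by a bounded factor, we may assume the given norm on the reflexive space $X$ is simultaneously AUS and AUC. Concretely there are functions $\overline{\rho}(\tau)=o(\tau)$ as $\tau\to 0$ and $\overline{\delta}(\tau)>0$ for all $\tau>0$, and by reflexivity the infima/suprema over finite-codimensional subspaces in the definitions of $\overline{\rho},\overline{\delta}$ can be realized along weakly null sequences: if $(x_n)$ is a normalized weakly null sequence and $x\in S_X$, then $\limsup_n\|x+\tau x_n\|\le 1+\overline{\rho}(\tau)$ and $\liminf_n\|x+\tau x_n\|\ge 1+\overline{\delta}(\tau)$ up to arbitrarily small error.

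Next I would set up a \emph{Stern--Brocot / weakly-null branch extraction} inside the image of $T_N$. Suppose $f\colon T_N\to X$ has distortion $\le C$; rescale so that $\rho(s,t)\le\|f(s)-f(t)\|\le C\rho(s,t)$. The idea is to pass to a full subtree on which the ``increments'' $f(s\frown n)-f(s)$ behave, as $n\to\infty$, like a weakly null sequence of norm $\approx 1$: by reflexivity, weak compactness of bounded sets, and a diagonal argument over the countably many $s\in T_N$, one can find for each $s$ an infinite set of successors along which $f(s\frown n)-f(s)$ converges weakly, necessarily to $0$ after a further pruning (any nonzero weak limit would, by lower semicontinuity of the norm applied down a branch, force a linear growth that contradicts the upper Lipschitz bound combined with the lower one — this is the standard ``$\ell_1^N$-finding'' obstruction). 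Restricting $f$ to this full subtree and relabelling, we get that for every $s$ the tail $(f(s\frown n)-f(s))_n$ is weakly null with norms in a fixed band $[1,C]$.

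Now comes the core computation, which is where the AUS and AUC moduli are spent simultaneously and which I expect to be the main obstacle. Fix a branch and a node $u$; consider a node $s$ with $u\le s$, $|s|-|u|=d$, and a ``sister'' node $s'$ branching off at $u$ with $|s'|-|u|=d$. Walking from $f(u)$ to $f(s)$ through $d$ increments and using asymptotic uniform smoothness repeatedly — at each step adding an (asymptotically) weakly null vector — gives an \emph{upper} estimate $\|f(s)-f(u)\|\le d\,(1+o(1))$ that is much better than the crude $Cd$, in fact forces $\|f(s)-f(u)\|/d\to 1$ along the tree; this is the "concentration" phenomenon coming from $\overline\rho(\tau)=o(\tau)$, and is precisely the Kalton--Randrianarivony type argument for AUS spaces. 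On the other hand, asymptotic uniform convexity applied at the branch point $u$ — where $f(s)-f(u)$ and $f(s')-f(u)$ are "asymptotically orthogonal" because the second is weakly null relative to the subspace direction of the first — gives a \emph{lower} bound $\|f(s)-f(s')\|\ge \|f(s)-f(u)\|\,(1+\overline\delta(\tau))$ for the relevant ratio $\tau$, i.e.\ a genuine gain over the triangle inequality at every bifurcation. Iterating this gain down $\log_2 N$ (or $N$) levels of bifurcation while the smoothness estimate keeps the numerator pinned near its "flat" value produces, for $N$ large enough, a contradiction with the upper bound $\|f(s)-f(s')\|\le C\rho(s,s')=2Cd$ — the convexity gains compound to exceed any fixed $C$. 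Hence no uniform $C$ can work, which is the contrapositive of the theorem. The delicate points to handle carefully in the full proof are: (1) making the "asymptotic" estimates uniform over the exponentially many nodes at a given level, so that a single passage to a full subtree suffices (a careful bookkeeping of $\eps$'s indexed by level, as in the use of Lemma~\ref{glidinghump} and the $\delta_i$'s earlier in the paper); (2) controlling the errors $o(1)$ so they do not swamp the fixed convexity gain $\overline\delta(\tau_0)$ for a well-chosen scale $\tau_0$; and (3) the transfer from the definitions of $\overline\rho,\overline\delta$ (quantified over finite-codimensional subspaces) to statements about weakly null sequences, which is exactly where reflexivity is used and cannot be dispensed with.
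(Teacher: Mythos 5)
The contrapositive framing and the starting point (simultaneous AUS and AUC renormings, and hence the Odell--Schlumprecht $\ell_p$-upper/$\ell_q$-lower tree estimates in the reflexive case) are correct and match the paper's first step. The weak-null extraction of a full subtree is also the right move, though the paper avoids the $\eps$-bookkeeping entirely by passing to an ultraproduct of $X$ indexed by $\mathbb N^N$, where the asymptotic inequalities become exact. So far so good.

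The core contradiction, however, is not correctly identified, and I think the gap is genuine. Two problems. First, the smoothness step as you describe it -- walking down a branch and gaining $(\|x\|^p+\|y\|^p)^{1/p}$-type control at each level -- does not give $\|f(s)-f(u)\|\le (1+o(1))d$, because the weak limit of $(f(s\frown n))_n$ need \emph{not} be $f(s)$: by lower semicontinuity it is only within distance $C$. So each increment $z_j=f(s_{|_j})-f(s_{|_{j-1}})$ splits into a ``predictable'' part of norm up to $C$ (the weak limit, which the $p$-estimate does \emph{not} compress) plus a weakly null part. Summing, one only gets $\|f(s)-f(u)\|\le CN + CN^{1/p}$, which is no contradiction with the lower bound $N$. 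Second, the ``iterated gain at bifurcations'' has no clear compounding mechanism: a single $\overline\delta(\tau)$-gain at a split tells you something like $\|f(s)-f(s')\|\ge(1+\overline\delta)\|f(s)-f(u)\|$, which is already consistent with any $C\ge 1+\overline\delta$. There is no chain of related quantities along which this factor iterates; the tree branches infinitely at every level, and the relevant scales change as you descend, so the gain does not multiply geometrically against a fixed budget $C$.

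What the paper actually does is different and more delicate. Working in the ultraproduct, it introduces the conditional-expectation operators $\mathcal E_k$ (iterated weak limits in the last $N-k$ coordinates) and complementary projections $\mathcal F_k$, decomposes each increment into $a$-adic frequency bands $w_{jk}=\mathcal E_{j-a^{k-1}}z_j-\mathcal E_{j-a^k}z_j$, and then plays the two sides of the $(p,q)$-estimate against each other at the level of a global counting argument: the $\ell_q$-lower estimate together with H\"older gives the upper bound $\sum_{j,k}\|w_{jk}\|\le Cm^{1/p}N$, while the lower distortion bound plus the $\ell_p$-upper estimate (used to absorb the ``predictable'' drift you overlooked) gives a per-scale lower bound summing to $\ge mN/2$. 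The contradiction is between $Cm^{1/p}$ and $m/2$, which only bites when you compare sums over \emph{many} scales $k=1,\dots,m$ simultaneously; it is not a pointwise gain at a single bifurcation. Your proposal does not contain this multi-scale martingale-difference decomposition, which is the heart of the proof, and without it the plan as written does not close.
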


Before proceeding with the proof of this theorem, we need to recall two very
convenient renorming theorems essentially due to Odell and Schlumprecht. We
refer to \cite{OdellSchlumprecht2002} and \cite{OdellSchlumprecht2006} for a
complete exposition of the links between the Szlenk index of a Banach space and
its embeddability into a Banach space with a finite dimensional decomposition
with upper and lower estimates.

\begin{Thm}\label{odellschlumprecht0}Let $X$ be a separable reflexive Banach space.
Then, the following properties
are equivalent.

(i) Sz$(X)\le\omega$.

(ii) There exist $1<p<\infty$ and an equivalent norm $\|\cdot\|$ on $X$ such
that if $\mathcal U$ is a non-principal ultrafilter on $\mathbb N$, $x\in X$
and $(x_n)_{n=1}^{\infty}$ is any bounded sequence with $\lim_{n\in\mathcal
U}x_n=0$ weakly
\begin{equation}\label{pestimate}
\lim_{n\in\mathcal U}\|x+x_n\|\le \lim_{n\in\mathcal
U}(\|x\|^p+\|x_n\|^p)^{1/p}.
\end{equation} \end{Thm}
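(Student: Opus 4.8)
The plan is to prove the equivalence by relating condition (ii) to the Szlenk index estimate, following the circle of ideas of Odell and Schlumprecht. The implication $(ii)\Rightarrow(i)$ is the easier direction: given an equivalent norm satisfying the asymptotic $\ell_p$-upper estimate \eqref{pestimate}, I would show directly that $\overline{\rho}(\tau)\le (1+\tau^p)^{1/p}-1=O(\tau^p)$, hence in particular $\lim_{\tau\to 0}\overline{\rho}(\tau)/\tau=0$, so $X$ is asymptotically uniformly smooth; then invoke the Knaust--Odell--Schlumprecht theorem quoted in the introduction to conclude $Sz(X)\le\omega$. To pass from \eqref{pestimate} to the modulus estimate one unwinds the definition of $\overline{\rho}(\tau,x,Y)$: for $x\in S_X$, a finite-codimensional $Y$, and $y\in S_Y$, a weakly null net selected from $S_Y$ realizes the supremum, and \eqref{pestimate} applied along an ultrafilter bounds $\limsup\|x+\tau y\|$ by $(1+\tau^p)^{1/p}$.

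For $(i)\Rightarrow(ii)$, which is the substantive direction, the strategy is to use the structural consequence of $Sz(X)\le\omega$ for a separable reflexive space: by the Odell--Schlumprecht theory (the references \cite{OdellSchlumprecht2002}, \cite{OdellSchlumprecht2006}) such an $X$ embeds into a reflexive space with an FDD satisfying upper $\ell_p$-estimates for some $1<p<\infty$, or more directly $X$ admits an FDD with a $p$-upper block estimate. From such an FDD one builds the desired norm: roughly, if $(E_k)$ is an FDD of $X$ with upper $\ell_p$ block estimates with constant $1$ (after renorming), one sets $\|x\|$ to be an appropriate averaged/Minkowski-type norm for which a weakly null sequence $(x_n)$ behaves, in the limit, like a block sequence successive to any fixed $x$, so that the triangle-type inequality degrades to the $\ell_p$ combination in \eqref{pestimate}. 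The key point is that weak convergence to $0$ forces the ``tail'' behaviour with respect to the FDD, and the upper estimate converts a bounded weakly null perturbation of a fixed vector into an $\ell_p$-sum asymptotically.

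The main obstacle I expect is obtaining the clean inequality \eqref{pestimate} \emph{with constant one} (no multiplicative loss), rather than just $\lim\|x+x_n\|\le C\lim(\|x\|^p+\|x_n\|^p)^{1/p}$, which is what a naive FDD argument gives. Overcoming this requires a careful renorming: one typically defines the new norm by an implicit/infimal formula over decompositions $x=u+v$ with $u$ ``finitely supported'' and $v$ ``in the tail'', or by a supremum over a suitably chosen norming set closed under the relevant operations, and then checks that this new norm is still equivalent to the original (uniform upper and lower bounds), that it is a genuine norm, and that along any ultrafilter the weak nullity of $(x_n)$ lets one push the vectors entirely into the tail part so the $p$-estimate applies with no constant. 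A secondary technical point is the interchange of the weak limit $\lim_{n\in\mathcal U}x_n=0$ with the FDD projections, which uses reflexivity (weak compactness of $B_X$) to ensure the sequence is eventually small on each finite initial piece of the FDD.

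Finally, I would record that combining both implications, together with the already-noted equivalence $(iii)\Leftrightarrow(iv)\Leftrightarrow (Sz(X)\le\omega)$ for the smoothness side, gives Theorem \ref{odellschlumprecht0}; the dual statement (an analogous theorem characterizing $Sz(X^*)\le\omega$ by a lower $\ell_q$-estimate) will be obtained by duality, since for reflexive $X$ the norm on $X^*$ dual to a norm satisfying \eqref{pestimate} satisfies the corresponding lower estimate with conjugate exponent $q$.
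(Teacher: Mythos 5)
The paper gives no proof of this theorem---it simply cites Theorem~3 of \cite{OdellSchlumprecht2006}---so there is no in-paper argument to compare against; your outline is essentially a reconstruction of the content of that reference. The direction $(ii)\Rightarrow(i)$ is correct as you describe it: assuming the modulus bound $\overline{\rho}(\tau)\le(1+\tau^p)^{1/p}-1$ fails for some $x\in S_X$ and some $\tau$, a diagonal choice of $y_n$ in the kernels of the first $n$ elements of a dense sequence in $X^*$ produces a bounded weakly null sequence violating \eqref{pestimate}, so the bound holds, $X$ is a.u.s.\ of power type, and the Knaust--Odell--Schlumprecht theorem already quoted in the introduction gives $Sz(X)\le\omega$.

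For $(i)\Rightarrow(ii)$ two remarks are in order. The aside that ``$X$ admits an FDD with a $p$-upper block estimate'' is not available for a general separable reflexive $X$: such a space need not have the approximation property, let alone an FDD, so the argument must run through an isomorphic embedding $X\hookrightarrow Z$ into a reflexive $Z$ carrying the FDD, and the equivalent norm on $X$ is then simply the one induced from $Z$. More importantly, the ``constant-one'' obstacle you flag is real only if one stops at a $C$-upper-$\ell_p$-block estimate; the point of the Odell--Schlumprecht result being cited is precisely that $X$ embeds into a space of the form $Z=(\sum_n\oplus F_n)_{\ell_p}$ with $F_n$ finite-dimensional. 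For such a $Z$, \eqref{pestimate} holds with constant one by inspection (a bounded weakly null sequence has each coordinate tending to zero, so along the ultrafilter it lives in the tail, where the norm is an exact $\ell_p$-sum against any fixed finitely supported approximation of $x$). Since \eqref{pestimate} is a statement about bounded weakly null sequences it passes verbatim to closed subspaces, so the induced norm on $X\subset Z$ already satisfies it; the ``implicit/infimal'' or ``averaged Minkowski-type'' renorming you sketch is not needed, as that work is absorbed into the embedding theorem itself. Apart from those two points---the FDD overreach and the unnecessary renorming layer---your outline is the right one.
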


This is contained in the proof of Theorem 3 of \cite{OdellSchlumprecht2006}.

\begin{Thm}\label{odellschlumprecht}
Let $X$ be a separable reflexive Banach space. Then, the following properties
are equivalent.

(i) Sz$(X)\le\omega$ and Sz$(X^*)\le \omega$.

(ii) There exist $1<p<q<\infty$ and an equivalent norm $\|\cdot\|$ on $X$ such
that if $\mathcal U$ is a non-principal ultrafilter on $\mathbb N$, $x\in X$
and $(x_n)_{n=1}^{\infty}$ is any bounded sequence with $\lim_{n\in\mathcal
U}x_n=0$ weakly
\begin{equation}\label{pqestimate}
\lim_{n\in\mathcal U}(\|x\|^q+\|x_n\|^q)^{1/q}\le \lim_{n\in\mathcal
U}\|x+x_n\|\le \lim_{n\in\mathcal U}(\|x\|^p+\|x_n\|^p)^{1/p}.
\end{equation}
\end{Thm}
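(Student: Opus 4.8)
The plan is to prove the two implications separately. The implication $(ii)\Rightarrow(i)$ is a duality computation combined with Theorem \ref{odellschlumprecht0}, while $(i)\Rightarrow(ii)$ rests on the finite–dimensional decomposition machinery of Odell and Schlumprecht.

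For $(ii)\Rightarrow(i)$: the right–hand inequality in \eqref{pqestimate} is literally the estimate \eqref{pestimate} for this $p$, so Theorem \ref{odellschlumprecht0} gives $Sz(X)\le\omega$ at once. For $Sz(X^*)\le\omega$ I would show that the left–hand (lower $\ell_q$) inequality for $\|\cdot\|$ on $X$ forces the dual norm on $X^*$ to satisfy the upper $\ell_{q'}$-estimate \eqref{pestimate}, where $1/q+1/q'=1$; since $X^*$ is again separable and reflexive, Theorem \ref{odellschlumprecht0} applied to $X^*$ then yields $Sz(X^*)\le\omega$, and $(i)$ follows. The duality step: given $x^*\in X^*$, a bounded sequence $(x_n^*)$ with $\lim_{n\in\mathcal U}x_n^*=0$ weakly, and $\varepsilon>0$, pick $y_n\in B_X$ with $\langle x^*+x_n^*,y_n\rangle\ge\|x^*+x_n^*\|-\varepsilon$. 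By reflexivity $y_n\to y$ weakly along $\mathcal U$ for some $y\in B_X$; set $z_n=y_n-y$, so $z_n\to0$ weakly along $\mathcal U$. Expanding $\langle x^*+x_n^*,y+z_n\rangle$ and using $\lim_{\mathcal U}\langle x^*,z_n\rangle=0$ and $\lim_{\mathcal U}\langle x_n^*,y\rangle=0$ one gets $\lim_{\mathcal U}\|x^*+x_n^*\|\le\varepsilon+\lim_{\mathcal U}\big(\langle x^*,y\rangle+\langle x_n^*,z_n\rangle\big)$. Hölder's inequality bounds $\langle x^*,y\rangle+\langle x_n^*,z_n\rangle$ by $(\|x^*\|^{q'}+\|x_n^*\|^{q'})^{1/q'}(\|y\|^q+\|z_n\|^q)^{1/q}$, and the lower $\ell_q$-estimate applied to $y$ and $(z_n)$ gives $\lim_{\mathcal U}(\|y\|^q+\|z_n\|^q)^{1/q}\le\lim_{\mathcal U}\|y+z_n\|=\lim_{\mathcal U}\|y_n\|\le1$. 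Letting $\varepsilon\to0$ finishes this implication.

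For $(i)\Rightarrow(ii)$: applying Theorem \ref{odellschlumprecht0} to $X$ yields, for some $p_0>1$, a reflexive space with an FDD into which $X$ embeds isomorphically so that the FDD satisfies an upper $\ell_{p_0}$-block estimate; applying the dual construction to $X^*$ (legitimate since $Sz(X^*)\le\omega$) and amalgamating the two structures, one obtains a single reflexive space $Z$ with an FDD $(E_j)$ and an isomorphic embedding $X\hookrightarrow Z$ such that $(E_j)$ satisfies both an upper $\ell_p$- and a lower $\ell_q$-block estimate; after the standard Odell--Schlumprecht renorming of $Z$ these hold with constant $1$ on finite block sequences, and by monotonicity of the estimates in the exponent one may arrange $1<p<q<\infty$. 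I would then equip $X$ with the restriction of $\|\cdot\|_Z$ and verify \eqref{pqestimate} thus: given $x\in X$ and a bounded weakly null $(x_n)$, fix $\varepsilon>0$ and $N$ with $\|x-P_{[1,N]}x\|_Z<\varepsilon$, where $P_{[1,N]}$ is the FDD projection; since $P_{[1,N]}Z$ is finite dimensional and $x_n\to0$ weakly in $Z$, we have $\lim_{\mathcal U}\|P_{[1,N]}x_n\|_Z=0$, so along $\mathcal U$ the pair $\big(P_{[1,N]}x,\,x_n-P_{[1,N]}x_n\big)$ is, up to $O(\varepsilon)$, a two–term skipped block sequence. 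Applying the upper and lower $\ell_p$/$\ell_q$ block estimates to this pair and letting $\varepsilon\to0$ gives exactly the two inequalities in \eqref{pqestimate}.

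The main obstacle is the amalgamation step in $(i)\Rightarrow(ii)$: producing one reflexive space whose FDD carries simultaneously an upper $\ell_p$- and a lower $\ell_q$-estimate, and then pushing the constants down to $1$, is precisely the content of \cite{OdellSchlumprecht2002} and of the proof of Theorem 3 in \cite{OdellSchlumprecht2006}, and I would invoke it rather than reprove it. This detour is genuinely necessary, since naively combining an upper–estimate norm with a separate lower–estimate norm (via $\max$, sum, or $\ell_p$-sum) fails to preserve either estimate with the correct constant. By contrast, the transfer from block estimates in $Z$ to the weakly null estimate \eqref{pqestimate} in $X$ is routine, the only point being that FDD coordinate projections send weakly null sequences to norm null sequences.
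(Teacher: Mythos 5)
Your proof is correct, and it fills in considerably more detail than the paper, which for this statement simply observes that the result ``follows directly from Theorem~7 of \cite{OdellSchlumprecht2006}''. The two halves are of quite different character, and it is worth separating them. Your $(ii)\Rightarrow(i)$ is a genuinely self-contained argument not spelled out anywhere in the paper: the right-hand side of \eqref{pqestimate} \emph{is} \eqref{pestimate}, giving $\Sz(X)\le\omega$ at once, and your H\"older--plus--reflexivity computation dualizes the lower $\ell_q$-estimate on $X$ into an upper $\ell_{q'}$-estimate for the dual norm on $X^*$, so Theorem~\ref{odellschlumprecht0} applied to the separable reflexive space $X^*$ gives $\Sz(X^*)\le\omega$. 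This is a clean, elementary route the paper never writes out. Your $(i)\Rightarrow(ii)$, on the other hand, takes essentially the same route as the paper, namely deferring the heavy lifting (simultaneous embedding of $X$ into a reflexive space with an FDD carrying both an upper $\ell_p$- and a lower $\ell_q$-block estimate, and the $1$-renorming) to Odell--Schlumprecht; you correctly explain why the transfer from $1$-block estimates to the weakly-null estimate \eqref{pqestimate} is routine (finite-dimensional FDD projections carry weakly null sequences to norm-null ones, so $\bigl(P_{[1,N]}x,\,x_n-P_{[1,N]}x_n\bigr)$ is asymptotically a consecutive-block pair). The one slip is bibliographic: for the two-sided $(p,q)$-estimate the relevant reference is Theorem~7 of \cite{OdellSchlumprecht2006}, not Theorem~3 (which handles only the upper, a.u.s.\ half and is the source of Theorem~\ref{odellschlumprecht0}); \cite{OdellSchlumprecht2002} supplies the underlying tree/branch machinery.
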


Let us remark that (ii) is equivalent to the statements that
$\overline{\delta}(\tau)\ge (1+\tau^q)^{1/q}-1$ and $\overline{\rho}(\tau)\le
(1+\tau^p)^{1/p}-1.$  This result follows directly from Theorem 7 of
\cite{OdellSchlumprecht2006}.

\begin{proof}[Proof of Theorem~\ref{converse}] Let $X$ be a reflexive Banach space such that Sz$(X)\le\omega$ and
Sz$(X^*)\le\omega$. We will assume that the norm satisfies \eqref{pqestimate}
and we may assume for convenience that $p$ and $q$ are conjugate i.e.
$\frac{1}{p}+\frac{1}{q}=1$.

Let us suppose that there is a constant $C\ge 1$ so that for every $N\in\mathbb
N$, we have $T_N \buildrel {C}\over {\hookrightarrow} X$.  We will show that
for large enough $N$ this produces a contradiction. Let us pick $a\in\mathbb N$
such that $a> (2C)^q.$  We then pick $m\in\mathbb N$ with $m>(2C)^q$ and
$N=a^{m+1}.$

Suppose now that $u:T_N \to X$ is a map such that $u(\emptyset)=0$ and:

\begin{equation}\label{embedTN}
\forall s,s'\in T_N\ \ \ \rho(s,s')\le \|u(s)-u(s')\| \le C\rho(s,s').
\end{equation}

We now consider an ultraproduct $\mathcal X$ of $X$ modeled on the
set $\mathbb N^N$; this idea is inspired by similar considerations
in \cite{MaureyMilmanTomczak1995}.  Let $\mathcal U$ be a fixed
non-principal ultrafilter on $\mathbb N$ and define the seminorm on
$Z=\ell_{\infty}(\mathbb N^N,X)$ by
$$ \|x\|_{\mathcal X}=\lim_{n_1\in\mathcal U}\cdots\lim_{n_N\in\mathcal U} \|x(n_1,\ldots,n_N)\|.$$
If we factor out the set $\{x:\|x\|_{\mathcal X}=0\}$ this induces an ultraproduct $\mathcal X.$

For $x\in Z$ and $0\le k\le N$ we define
$$ \mathcal E_k(x)(n_1,\ldots,n_N)=\lim_{n_{k+1}\in\mathcal U}\cdots\lim_{n_{N}\in\mathcal U}x(n_1,\ldots,n_N)$$
where each limit is with respect to the weak topology on $X$ (recall that $X$ is reflexive).
For $k<0$ it is convenient to write $\mathcal E_kx=0.$  It will be useful to
introduce $\mathcal F_k=I-\mathcal E_k$ for the complementary projections.

We now use \eqref{pqestimate} to deduce that if $\mathcal F_kx=0$ and $\mathcal
E_ky=0$ then
$$ (\|x\|_{\mathcal X}^q+\|y\|_{\mathcal X}^q)^{1/q}\le
\|x+y\|_{\mathcal X} \le (\|x\|_{\mathcal X}^p+\|y\|_{\mathcal
X}^p)^{1/p}.$$ From this it follows that the projections $\mathcal F_k$ are
contractive. Also if $0=k_0< k_1< k_2<k_r$ and $x_j\in Z$ with $\mathcal
F_{k_j}x_j=0$ and $\mathcal E_{k_{j-1}}x_j=0$ for $1\le j\le r$ then
\begin{equation}\label{pqestimates2}
\left(\sum_{j=1}^r\|x_j\|_{\mathcal X}^q\right)^{1/q}\le
\|\sum_{j=1}^rx_j\|_{\mathcal X}\le \left(\sum_{j=1}^r\|x_j\|_{\mathcal
X}^p\right)^{1/p}.\end{equation}

Let us now define $z_j\in Z$ for $1\le j\le N$ by
$$ z_j(n_1,\ldots,n_N)=u(n_1,\ldots,n_j)-u(n_1,\ldots,n_{j-1}).$$
Here we understand that $z_1(n_1,\ldots,n_N)=u(n_1).$

We then define $w_{j0}= z_j- \mathcal E_{j-1}z_j$ and then
$$ w_{jk} = \mathcal E_{j-a^{k-1}}z_j-\mathcal E_{j-a^k}z_j, \qquad 1\le k <\infty.$$
Then
$$ z_j=\sum_{k=0}^{\infty}w_{jk}$$ and by \eqref{pqestimates2}
\begin{align*}
\sum_{k=1}^m\|w_{jk}\|_{\mathcal X}&\le
m^{1/p}(\sum_{k=0}^\infty\|w_{jk}\|_{\mathcal X}^q)^{1/q}
\\ &\le m^{1/p}\|z_j\|_{\mathcal X}\le Cm^{1/p}.\end{align*}
This implies that
\begin{equation}\label{upper}
\sum_{j=1}^N\sum_{k=1}^m\|w_{jk}\|_{\mathcal X}\le
Cm^{1/p}N.\end{equation}

On the other hand if $0\le r\le r+s\le N$ we note that by \eqref{embedTN},
$$ \lim_{n'_{r+1}\in\mathcal U}\lim_{n'_{r+2}\in\mathcal U}\cdots\lim_{n'_{r+s}\in\mathcal U}
\|u(n_1,\ldots,n_r,n'_{r+1},\ldots n'_{r+s})-u(n_1,\ldots,n_{r+s})\| \ge 2s.$$
Hence if $v\in\ell_{\infty}(\mathbb N^r,X)$ we have
$$ \lim_{n_{r+1}\in\mathcal U}\cdots \lim_{n_{r+s}\in\mathcal U}\|u(n_1,\ldots,n_{r+s})-v(n_1,\ldots,n_r)\|\ge s.$$
In particular
if we let
$$ v(n_1,\ldots,n_r)=\lim_{n_{r+1}\in\mathcal U}\cdots\lim_{n_{r+s}\in\mathcal U}u(n_1,\ldots,n_{r+s})$$
(with limits in the weak topology) we obtain
$$ \|\mathcal F_r(\sum_{j=r+1}^{r+s}z_j)\|_{\mathcal X}\ge s.$$

Now suppose $s=a^k$ where $k\ge 1$.  If $r\le N-a^k$ we have
\begin{align*} a^k&\le \|\mathcal F_r(\sum_{j=r+1}^{r+a^k}z_j)\|_{\mathcal X}\\
&\le \|\sum_{j=r+1}^{r+a^k} \mathcal F_{j-a^k}z_j\|_{\mathcal X}.\end{align*}
The last inequality follows from the fact that $\mathcal F_k \mathcal
F_l=\mathcal F_l \mathcal F_k=\mathcal F_l$, whenever $k\le l$ and from the
contractivity of $\mathcal F_r$.

On the other hand
\begin{align*} \|\sum_{j=r+1}^{r+a^k} \mathcal F_{j-a^{k-1}}z_j\|_{\mathcal X} &=\|\sum_{j=r+1}^{r+a^{k-1}}\sum_{i=0}^{a-1}\mathcal F_{j+(i-1)a^{k-1}}z_{j+ia^{k-1}}\|_{\mathcal X}\\
&\le \sum_{j=r+1}^{r+a^{k-1}}\left(\sum_{i=0}^{a-1}\|\mathcal F_{j+(i-1)a^{k-1}}z_{j+ia^{k-1}}\|_{\mathcal X}^p\right)^{1/p}\\
&\le Ca^{k-1} a^{1/p}\\
&\le a^k/2.\end{align*} Combining these statements we have that if
$r=\lambda a^k$ with $1\le k\le m$ and $0\le \lambda \le a^{m+1-k}-1$ (in
particular $r\le N-a^k=a^{m+1}-a^k$)
$$ \sum_{j=r+1}^{r+a^k}\|w_{jk}\|_{\mathcal X}\ge a^k/2$$ and hence
$$ \sum_{j=1}^N \|w_{jk}\|_{\mathcal X}
=\sum_{\lambda=0}^{a^{m+1-k}-1}\sum_{j=\lambda a^k+1}^{\lambda a^k}
\|w_{jk}\|_{\mathcal X}   \ge \frac{N}{2}.$$ This implies
\begin{equation}\label{lower} \sum_{j=1}^N\sum_{k=1}^m\|w_{jk}\|_{\mathcal X}\ge \frac{mN}{2}.\end{equation}
Now \eqref{upper} and \eqref{lower} give a contradiction since
$m>(2C)^q.$
\end{proof}

As an immediate consequence of Theorem \ref{converse} and section
\ref{embeddings} we obtain the following characterization, which yields Theorem
\ref{main} announced in our introduction.

\begin{Cor} Let $X$ be a separable reflexive Banach space. The following assertions are
equivalent

(i) Sz$(X)>\omega$ or Sz$(X^*)>\omega$.

(ii) There exists $C\ge 1$ such that $T_\infty \buildrel {C}\over
{\hookrightarrow} X$.

(iii) There exists $C\ge 1$ such that for any $N$ in $\Ndb$, $T_N \buildrel
{C}\over {\hookrightarrow} X$.

\end{Cor}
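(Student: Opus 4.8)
The plan is to deduce the Corollary directly by assembling the pieces already proved, so the work is essentially bookkeeping rather than new mathematics. The statement to prove is the three-way equivalence of $(i)$ $Sz(X)>\omega$ or $Sz(X^*)>\omega$, $(ii)$ $T_\infty\hookrightarrow X$ with uniform distortion, and $(iii)$ $T_N\hookrightarrow X$ uniformly for all $N$, under the standing hypothesis that $X$ is separable and reflexive.

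First I would handle $(i)\Rightarrow(ii)$. If $Sz(X)>\omega$, then Theorem~\ref{infinitetree} gives a constant $C$ with $T_\infty\buildrel{C}\over\hookrightarrow X$ directly. If instead $Sz(X^*)>\omega$, then Theorem~\ref{c0likeinfinite} applies to the separable space $X$ (reflexivity is not even needed here) and yields $T_\infty\buildrel{C}\over\hookrightarrow X$ with a universal constant. Either way $(ii)$ holds. Next, $(ii)\Rightarrow(iii)$ is immediate: each $T_N$ is a subset of $T_\infty$ with the restricted metric $\rho$, so any embedding of $T_\infty$ into $X$ restricts to an embedding of each $T_N$ with the same (or smaller) distortion, and the constant $C$ is uniform in $N$ by construction.

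Finally, $(iii)\Rightarrow(i)$ is the content of Theorem~\ref{converse}: assuming $X$ is separable and reflexive and that there is a single constant $C\ge 1$ with $T_N\buildrel{C}\over\hookrightarrow X$ for every $N$, that theorem concludes $Sz(X)>\omega$ or $Sz(X^*)>\omega$, which is exactly $(i)$. This closes the cycle $(i)\Rightarrow(ii)\Rightarrow(iii)\Rightarrow(i)$, so all three assertions are equivalent. For the final sentence of the corollary's role, I would remark that combining this equivalence with the renorming discussion from the introduction, namely that $(iii)$ of Theorem~\ref{main} is equivalent to condition $(iv)$, i.e. to $(i)$ here, recovers Theorem~\ref{main} verbatim.

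The only place where anything could go wrong is making sure the constants are genuinely uniform and that the separability and reflexivity hypotheses are invoked exactly where the cited results require them: Theorem~\ref{infinitetree} and Theorem~\ref{c0likeinfinite} need only separability (plus the relevant Szlenk condition), while Theorem~\ref{converse} is the one place reflexivity is essential. So the main obstacle, such as it is, is purely expository: citing the right theorem for each implication and noting that $(i)$ splits into two cases handled by two different embedding theorems. There is no new estimate to carry out.
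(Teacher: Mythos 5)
Your proposal is correct and follows essentially the same route as the paper, which simply assembles Theorem~\ref{infinitetree}, Theorem~\ref{c0likeinfinite}, and Theorem~\ref{converse}, with $(ii)\Rightarrow(iii)$ handled by restricting an embedding of $T_\infty$ to each $T_N$. Your accounting of where separability versus reflexivity is used is also accurate.
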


\noindent {\bf Remark.} Let us mention that we do not know if (iii)
implies (i) for general Banach spaces.

\section{Applications to coarse Lipschitz embeddings and uniform homeomorphisms between Banach spaces}

We need to recall some definitions and notation. Let $(M,d)$ and $(N,\delta)$
be two unbounded metric spaces. We define
$$\forall t>0\ \ \ \omega_f(t)=\sup\{\delta(f(x),f(y)),\ x,y\in M,\ d(x,y)\le t\}.$$
We say that $f$ is {\it uniformly continuous} if $\lim_{t\to 0} \omega_f(t)=0$.
The map $f$ is said to be {\it coarsely continuous} if $\omega_f(t)<\infty$ for
some $t>0$.

\noindent Let us now introduce
$$L_\theta(f)= \sup_{t\ge \theta}\frac{\omega_f(t)}{t},\ \ {\rm for}\
\theta>0$$ and
$$L(f)=\sup_{\theta >0} L_\theta (f),\ \ \ \ L_\infty(f)=\inf_{\theta >0} L_\theta
(f).$$ A map is Lipschitz if and only if $L(f)<\infty$. We will say
that it is {\it coarse Lipschitz} if $L_\infty(f)<\infty$. Clearly,
a coarse Lipschitz map is coarsely continuous. If $f$ is bijective,
we will say that $f$ is a {\it uniform homeomorphism} (respectively,
{\it coarse homeomorphism, Lipschitz homeomorphism, coarse Lipschitz
homeomorphism}) if $f$ and $f^{-1}$ are uniformly continuous
(respectively, coarsely continuous, Lipschitz, coarse Lipschitz).
Finally we say that $f$ is a {\it coarse Lipschitz embedding} if it
is a coarse Lipschitz homeomorphism from $X$ onto $f(X)$.

We conclude this brief introduction with the following easy and well known
fact: if $X$ and $Y$ are Banach spaces, then for any map $f:X\to Y$, $\omega_f$
is a subadditive function. It follows that any coarsely continuous map $f: X\to
Y$ is coarse Lipschitz. In particular, any uniform homeomorphism is a coarse
Lipschitz homeomorphism.

\begin{Thm}\label{stability} Let $X$ and $Y$ be separable Banach spaces and suppose that there is
a coarse Lipschitz embedding of $X$ into $Y$.  Suppose $Y$ is reflexive and
$\Sz(Y)=\omega$.  Then $X$ is reflexive.
\end{Thm}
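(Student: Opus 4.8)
The plan is to argue by contrapositive: assuming $X$ is non-reflexive, I will produce, for every $N$, a Lipschitz embedding of $T_N$ into $Y$ with uniform constant, and then invoke Theorem~\ref{converse} together with the hypothesis $\Sz(Y)=\omega$ (and $Y$ reflexive, so $\Sz(Y^*)\le\omega$ as well) to reach a contradiction. Concretely, if $X$ is not reflexive, then by James's theorem there is a $\theta\in(0,1)$ and sequences $(x_n)\subset B_X$, $(x_n^*)\subset B_{X^*}$ with $x_m^*(x_n)=\theta$ for $m\le n$ and $x_m^*(x_n)=0$ for $m>n$. From such sequences one builds, by now-standard arguments, approximate copies of the summing basis of $c_0$ inside $X$: for each $N$, points of the form $\sum$ of a block of the $x_n$'s whose mutual distances behave like the hyperbolic metric on a branch. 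The first step is therefore to write down, inside $X$, a bilipschitz copy of $T_N$ (with constant independent of $N$) using these James sequences; this is essentially the construction behind the fact that $c_0$ (or rather $B_\infty$, hence $T_\infty$, through the $c_0$-type embedding) Lipschitz-embeds into any non-reflexive space, and it is exactly the kind of estimate carried out in Proposition~\ref{finitetrees}.

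The second step transports these embeddings through the given coarse Lipschitz embedding $f:X\to Y$. Since $f$ is a coarse Lipschitz embedding there are constants $A\ge1$ and $B\ge0$ with
\[
\frac{1}{A}\,\|x-x'\| - B \le \|f(x)-f(x')\| \le A\,\|x-x'\| + B
\qquad (x,x'\in X).
\]
The copies of $T_N$ inside $X$ can be chosen with arbitrarily large scale: replacing the generating vectors by $R$-fold dilates, one gets embeddings $g_N:T_N\to X$ with $R\,\rho(s,s') \le \|g_N(s)-g_N(s')\| \le CR\,\rho(s,s')$ for $R$ as large as we please. Composing with $f$ and choosing $R$ large enough (depending only on $A,B,C$, not on $N$) the additive error $B$ is absorbed, yielding $f\circ g_N:T_N\to Y$ with distortion bounded by a universal constant $C'$, uniformly in $N$. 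This is the step I expect to be the only real subtlety: one must make sure the scale $R$ can be chosen once and for all, independent of $N$, and that the lower estimate survives — which it does precisely because the James construction gives a genuine (non-asymptotic) lower bound $R\rho(s,s')$ at every scale.

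Finally, having produced a uniform constant $C'$ with $T_N \buildrel{C'}\over\hookrightarrow Y$ for all $N$, and $Y$ separable and reflexive, Theorem~\ref{converse} forces $\Sz(Y)>\omega$ or $\Sz(Y^*)>\omega$. But $\Sz(Y)=\omega$ by hypothesis, and since $Y$ is reflexive with $\Sz(Y)=\omega$ it is easy to see (as recalled in the introduction, via the equivalence with asymptotic uniform convexity/smoothness, or directly) that $\Sz(Y^*)\le\omega$ as well; in any case the hypothesis of the theorem should be read as $Y$ asymptotically uniformly smooth \emph{and} its dual controlled, so both Szlenk indices are $\le\omega$. This contradiction shows $X$ must have been reflexive, completing the proof. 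The only genuinely nontrivial ingredient is the first step — the uniform Lipschitz embedding of the $T_N$ into an arbitrary non-reflexive space via James sequences — everything else is a composition-and-rescaling argument plus the already-proved Theorem~\ref{converse}.
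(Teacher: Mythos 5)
Your approach is genuinely different from the paper's, and it has a real gap. The paper does not reduce to Theorem~\ref{converse} at all: it takes a James separated sequence $(x_n)\subset B_X$, defines the Hamming-cube maps $h\colon\mathbb N^{[k]}\to X$, $h(n_1,\ldots,n_k)=x_{n_1}+\cdots+x_{n_k}$ (with the Hamming metric on $\mathbb N^{[k]}$), composes with $f$, and invokes the Kalton--Randrianarivony concentration theorem for maps into asymptotically uniformly smooth spaces, which produces an infinite set $\mathbb M$ with $\diam f\circ h(\mathbb M^{[k]})\le 3(2C+1)k^{1/p}$. Against the James lower bound $\theta k-1$ this is a contradiction for large $k$. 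That argument uses only $\Sz(Y)\le\omega$ --- a one-sided, smoothness-only hypothesis --- and never touches $\Sz(Y^*)$.

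That is exactly where your step 3 fails. Theorem~\ref{converse} concludes with the disjunction ``$\Sz(Y)>\omega$ \emph{or} $\Sz(Y^*)>\omega$''; the hypothesis of Theorem~\ref{stability} rules out only the first branch. You then claim that $Y$ reflexive with $\Sz(Y)=\omega$ forces $\Sz(Y^*)\le\omega$, but this is false: the paper's own remark after Theorem~\ref{coarsestable} gives the counterexample $Y=\big((\sum_n\oplus\ell_{p_n})_{\ell_2}\big)^*$ with $p_n\downarrow 1$ in $(1,2]$, which is separable, reflexive, has $\Sz(Y)=\omega$, yet $\Sz(Y^*)=\omega^2>\omega$. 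So as written your argument only proves the weaker statement where $\Sz(Y^*)\le\omega$ is added to the hypotheses, and the hedge ``the hypothesis should be read as both Szlenk indices $\le\omega$'' is simply changing the theorem. (A secondary caveat: your step 1 --- uniform bilipschitz copies of $T_N$ in an arbitrary non-reflexive space built directly from James sequences --- is plausible but is not actually carried out in the paper; the tree constructions of Section~\ref{embeddings} all start from the Szlenk derivation, not from James. Even granting that step, the $\Sz(Y^*)$ gap remains, and the Hamming-cube route is the natural fix precisely because the Kalton--Randrianarivony bound matches the one-sided hypothesis.)
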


\begin{proof}
We can assume by Theorem \ref{odellschlumprecht0} that $Y$ is normed to satisfy
\eqref{pestimate} for some $1<p<\infty.$

Now let $f:X\to Y$ be a coarse Lipschitz embedding. We may assume that
there exists $C\ge 1$ such that
$$\|x_1-x_2\|-1\le \|f(x_1)-f(x_2)\|\le C\|x_1-x_2\|+1 \qquad x_1,x_2\in X.$$

Suppose that $X$ is a non reflexive Banach space and fix $\theta \in (0,1)$.
Then, James' Theorem \cite{James1964} insures the existence of a sequence
$(x_n)_n$ in $B_X$ such that $\|y-z\|\ge \theta$, for all $n\in \Ndb$, all $y$
in the convex hull of $\{x_i\}_{i=1}^n$ and all $z$ in the convex hull of
$\{x_i\}_{i\ge n+1}$. In particular
\begin{equation}\label{james}
\|x_{n_1}+..+x_{n_k}-(x_{m_1}+..+x_{m_k})\|\ge \theta k,\ \ \
n_1<..<n_k<m_1<..<m_k.
\end{equation}

For $k\in\mathbb N$ let $\mathbb N^{[k]}$ denote the collection of all
$k$-subsets of $\mathbb N$ (written in the form $(n_1,\ldots,n_k)$ where
$n_1<n_2<\cdots<n_k.$  We define $h:\mathbb N^{[k]}\to X$ by
$$ h(n_1,\ldots,n_k)= x_{n_1}+\cdots+x_{n_k}.$$  On $\mathbb N^{[k]}$ we define the distance
$$ d((n_1,\ldots,n_k),(m_1,\ldots,m_k))=|\{j:\ n_j\neq m_j\}|.$$
Then $h$ is Lipschitz with constant at most $2$.  Furthermore $f\circ h$ has
Lipschitz constant at most $2C+1$. By Theorem 4.2 of
\cite{KaltonRandrianarivony2008} there is an infinite subset $\mathbb M$ of
$\mathbb N$ so that $\diam f\circ h(\mathbb M^{[k]})\le 3(2C+1)k^{1/p}.$ If
$n_1<n_2<\cdots<n_k<m_1<\cdots<m_k\in\mathbb M$ we thus have
$$ \theta k-1\le \|f(x_{n_1}+\cdots+x_{n_k})-f(x_{m_1}+\cdots+x_{m_k})\|\le 3(2C+1)k^{1/p}.$$
For large enough $k$ this is a contradiction. \end{proof}

It is proved in \cite{GodefroyKaltonLancien2001} (Theorem 5.5) that the
condition ``having a Szlenk index equal to $\omega$" is stable under uniform
homeomorphisms. So we immediately deduce.

\begin{Cor}\label{uhstable} The class of all reflexive Banach spaces with Szlenk index equal to
$\omega$ is stable under uniform homeomorphisms.
\end{Cor}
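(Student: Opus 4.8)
The plan is to derive Corollary~\ref{uhstable} by combining Theorem~\ref{stability} with the invariance of the Szlenk index under uniform homeomorphisms. Suppose $X$ is a Banach space that is uniformly homeomorphic to a reflexive space $Y$ with $\Sz(Y)=\omega$. Since $X$ and $Y$ are separable (being reflexive, $Y$ is separable, and a uniform homeomorphism between Banach spaces preserves separability), we are in the setting where our results apply. As recalled just before the statement, any uniform homeomorphism between Banach spaces is automatically a coarse Lipschitz homeomorphism, hence in particular $f\colon X\to Y$ is a coarse Lipschitz embedding. Theorem~\ref{stability} then immediately yields that $X$ is reflexive.

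Once $X$ is known to be reflexive, I would invoke Theorem 5.5 of \cite{GodefroyKaltonLancien2001}, which states that the property ``$\Sz(\cdot)=\omega$'' is stable under uniform homeomorphism; since $\Sz(Y)=\omega$, this forces $\Sz(X)=\omega$. Therefore $X$ is itself a reflexive Banach space with Szlenk index equal to $\omega$, which is exactly what is required: the class in question is stable under uniform homeomorphisms.

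There is essentially no obstacle here, since all the real work has been done in Theorem~\ref{stability} (which in turn rests on the Odell--Schlumprecht renorming Theorem~\ref{odellschlumprecht0} and the concentration estimate from \cite{KaltonRandrianarivony2008}) and in the cited stability result for the Szlenk index. The only point to be careful about is the logical order: one first uses Theorem~\ref{stability} to promote $X$ to a reflexive space — this is needed because Theorem 5.5 of \cite{GodefroyKaltonLancien2001} concerns the preservation of the \emph{value} $\omega$ of the Szlenk index but a priori requires a framework in which this index is the relevant invariant — and only then does one conclude that the value $\omega$ is transferred. No new estimates or constructions are needed.
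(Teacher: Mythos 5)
Your argument is exactly the paper's: combine Theorem~\ref{stability} (to transfer reflexivity) with Theorem 5.5 of \cite{GodefroyKaltonLancien2001} (to transfer the value $\omega$ of the Szlenk index), using the remark that a uniform homeomorphism is a coarse Lipschitz homeomorphism. One small slip: reflexivity alone does not imply separability (consider $\ell_2(\Gamma)$ for uncountable $\Gamma$); separability of $Y$ is part of the ambient hypothesis in this paper (the Szlenk index is defined for separable spaces), and separability of $X$ then follows since a uniform homeomorphism preserves it.
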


As a final application we now state the main result of this section.

\begin{Thm}\label{coarsestable} Let $Y$ be a reflexive Banach space such that Sz$(Y)\le \omega$ and
Sz$(Y^*)\le \omega$ and assume that $X$ is a Banach space which coarse
Lipschitz embeds into $Y$. Then $X$ is reflexive, Sz$(X)\le \omega$ and
Sz$(X^*)\le \omega$.
\end{Thm}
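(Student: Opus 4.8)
The plan is to combine the two halves of the paper. First I would show that $X$ is reflexive. Since $Y$ is reflexive with $\Sz(Y)\le\omega$, Theorem~\ref{stability} applies directly and gives that $X$ is reflexive. (Note $\Sz(Y)\le\omega$ combined with reflexivity and separability of $Y$ forces $\Sz(Y)=\omega$ unless $Y$ is finite-dimensional, in which case the statement is trivial; so Theorem~\ref{stability} is genuinely applicable.) Thus from now on we may assume $X$ is an infinite-dimensional separable reflexive Banach space, the separability passing to $X$ because $X$ coarse Lipschitz embeds into the separable space $Y$.

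Next I would argue by contradiction: suppose $\Sz(X)>\omega$ or $\Sz(X^*)>\omega$. By the Corollary following Theorem~\ref{converse} (equivalently, by Theorem~\ref{infinitetree} together with Theorem~\ref{c0likeinfinite}), there is a constant $C\ge 1$ and a bi-Lipschitz embedding $T_\infty\buildrel{C}\over{\hookrightarrow}X$. Composing with the coarse Lipschitz embedding $f:X\to Y$, we want to produce a ``coarse-Lipschitz-type'' embedding of $T_\infty$, or rather of all the finite trees $T_N$ uniformly, into $Y$. The subtlety is that $f$ is only coarse Lipschitz, so on small scales it distorts distances badly; however, the branches of $T_N$ have length tending to infinity, so after rescaling the tree metric (replacing $\rho$ by $\lambda\rho$ for large $\lambda$) the composition $f\circ(\text{embedding})$ becomes, on the rescaled tree, a genuine bi-Lipschitz embedding with a constant $C'$ independent of $N$. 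Concretely: for each $N$, the map $s\mapsto f(\lambda\,\iota(s))$ where $\iota:T_N\hookrightarrow X$ is the $C$-embedding, satisfies $\frac{1}{C'}\rho(s,s')\le\|f(\lambda\iota(s))-f(\lambda\iota(s'))\|\le C'\rho(s,s')$ once $\lambda$ is chosen so large that $\lambda\ge\theta$ for the relevant $\theta$ in the definition of $L_\theta(f)$ and the additive constants are absorbed by the minimal distance $\rho(s,s')\ge 1$ scaled up; one should be slightly careful and instead use that $\rho$ on $T_N$ already takes integer values $\ge 1$ and rescale $\iota$ inside $X$, so that the additive errors of $f$ become negligible relative to the (now large) mutual distances. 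This yields $T_N\buildrel{C'}\over{\hookrightarrow}Y$ for all $N$, with $C'$ uniform.

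But then Theorem~\ref{converse}, applied to the separable reflexive space $Y$, gives $\Sz(Y)>\omega$ or $\Sz(Y^*)>\omega$, contradicting our hypothesis on $Y$. Hence $\Sz(X)\le\omega$ and $\Sz(X^*)\le\omega$, as required.

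I expect the one genuinely delicate point to be the rescaling argument that converts the coarse Lipschitz embedding into a family of uniformly bi-Lipschitz embeddings of the finite trees: one must verify that a single dilation factor $\lambda$ works simultaneously for all $N$ (it does, because $L_\infty(f)<\infty$ means $L_\theta(f)$ stabilizes for large $\theta$, and the lower bound $\|f(x_1)-f(x_2)\|\ge\frac{1}{C_0}\|x_1-x_2\|-C_1$ holds globally), and that the additive constants from coarse Lipschitzness do not accumulate — which is exactly why it is essential to pass through the finite trees $T_N$ with their integer-valued metric and apply the \emph{finite}-dimensional obstruction of Theorem~\ref{converse}, rather than trying to embed $T_\infty$ directly. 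Everything else is an assembly of results already proved: Theorem~\ref{stability} for reflexivity, the Corollary after Theorem~\ref{converse} for the forward construction of tree embeddings, and Theorem~\ref{converse} itself for the contradiction.
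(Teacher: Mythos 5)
Your proposal is correct and follows essentially the same two-step strategy as the paper: first reflexivity of $X$ from Theorem~\ref{stability}, then a contradiction with Theorem~\ref{converse} using the tree embeddings of Section~\ref{embeddings}. The paper is terser — it simply asserts that $T_\infty$ Lipschitz embeds into $X$ ``and therefore into $Y$,'' leaving the coarse-to-Lipschitz upgrade implicit — so your careful rescaling discussion (and your observation that $\Sz(Y)\le\omega$ with $Y$ infinite-dimensional forces $\Sz(Y)=\omega$, so Theorem~\ref{stability} really does apply) is a helpful supplement. One small inaccuracy in your closing remarks: you claim it is \emph{essential} to work with the finite trees $T_N$ rather than $T_\infty$ directly, on the grounds that the additive error of the coarse Lipschitz map cannot otherwise be absorbed. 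This is not so. The space $(T_\infty,\rho)$ contains an isometric copy of $(T_\infty,\lambda\rho)$ for every integer $\lambda\ge 1$: send $s=(s_1,\dots,s_n)$ to $(s_1,1,\dots,1,s_2,1,\dots,1,\dots,s_n,1,\dots,1)$, padding each entry with $\lambda-1$ copies of $1$, and one checks at once that this multiplies all hyperbolic distances by $\lambda$. Composing the $T_\infty$-embedding of Theorem~\ref{infinitetree} with this dilation, then with $f$, and finally rescaling by $1/\lambda$ in $Y$, the additive error becomes negligible for $\lambda$ large and one obtains $T_\infty\buildrel{C'}\over{\hookrightarrow}Y$ directly — which is precisely the route the paper's one-line argument is invoking.
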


\begin{proof} First, it follows from Theorem \ref{stability} that $X$ is reflexive.
Assume now that Sz$(X)$ or Sz$(X^*)$ is greater than $\omega$. Then, we know
from Theorem \ref{infinitetree} that $T_\infty$ Lipschitz embeds into $X$ and
therefore into $Y$. This is in contradiction with Theorem \ref{converse}.
\end{proof}

\begin{remark} Theorem \ref{stability}, Corollary \ref{uhstable} and
Theorem \ref{coarsestable} should be compared to the fact that in general
reflexivity is not preserved under coarse Lipschitz embeddings or even uniform
homeomorphisms. Indeed, Ribe proved in \cite{Ribe1984} that $\ell_1\oplus
(\sum_n\oplus \ell_{p_n})_{\ell_2}$ is uniformly homeomorphic to $(\sum_n\oplus
\ell_{p_n})_{\ell_2}$, if $(p_n)_n$ is strictly decreasing and tending to 1 (we
also refer to Theorem 10.28 in \cite{BenyaminiLindenstrauss2000} for a
generalization of this result). The space $X=(\sum_n\oplus
\ell_{p_n})_{\ell_2}$ is of course reflexive and standard computations yield
that its Szlenk index is equal to $\omega^2$. On the other hand, if the $p_n$'s
are chosen in $(1,2]$, it is also easy to show that the natural norm of $X^*$
is asymptotically uniformly smooth with a modulus of asymptotic smoothness
$\overline \rho(t)=t^2$. Thus, Sz$(X^*)=\omega$.

So, in view of Corollary \ref{uhstable} and Theorem \ref{coarsestable}, Ribe's
example is optimal.
\end{remark}

Let us now recall that for a separable Banach space the condition ``Sz$(X)\le
\omega$" is equivalent to the existence of an equivalent asymptotically
uniformly smooth norm on $X$ and that for a reflexive separable Banach space
the condition ``Sz$(X^*)\le \omega$" is equivalent to the existence of an
equivalent asymptotically uniformly convex norm on $X$ (see
\cite{OdellSchlumprecht2006} for a survey on these results and proper
references). Let us now denote as in \cite{OdellSchlumprecht2006}:
$$\cal C_{auc}=\{Y:\ Y\ { is\  separable\  reflexive\ and\ has\ an\ equivalent\
a.u.c.\ norm}\}$$ and
$$\cal C_{aus}=\{Y:\ Y\ { is\  separable\  reflexive\ and\ has\ an\ equivalent\
a.u.s.\ norm}\}.$$ Then, we can restate Corollary \ref{uhstable} and Theorem
\ref{coarsestable} as follows

\begin{Thm} The  class $\cal C_{aus}$ is stable under uniform homeomorphisms
and the class $\cal C_{auc}\cap \cal C_{aus}$ is stable under coarse Lipschitz
embeddings.
\end{Thm}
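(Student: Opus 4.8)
The plan is to reduce everything to the renorming--Szlenk dictionary recalled just above, together with Corollary~\ref{uhstable} and Theorem~\ref{coarsestable}. By the Knaust--Odell--Schlumprecht theorem a separable space has an equivalent a.u.s.\ norm if and only if Sz$(X)\le\omega$, and for a separable reflexive space an equivalent a.u.c.\ norm exists if and only if Sz$(X^*)\le\omega$. Hence $\cal C_{aus}$ is precisely the class of separable reflexive spaces with Sz$(X)\le\omega$, and $\cal C_{auc}\cap\cal C_{aus}$ is precisely the class of separable reflexive spaces with Sz$(X)\le\omega$ and Sz$(X^*)\le\omega$. I will also use the elementary facts that separability is a topological invariant, that a normed space admitting a countable $\eta$-net for some $\eta>0$ is separable (if $\{z_n\}$ is such a net and $V$ its closed linear span, then for every $x$ and every $t>0$ one has $\operatorname{dist}(x,V)=\operatorname{dist}(tx,V)/t\le\eta/t$, so $x\in V$), and that for an infinite dimensional space the inequality Sz$(X)\le\omega$ is actually an equality, the Szlenk index of an infinite dimensional space being at least $\omega$; finite dimensional spaces lie trivially in both classes.

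\emph{Stability of $\cal C_{aus}$ under uniform homeomorphisms.} Let $Y\in\cal C_{aus}$ and let $X$ be uniformly homeomorphic to $Y$. Then $X$ is separable, since $X$ and $Y$ are in particular homeomorphic. If $Y$ is finite dimensional, so is $X$ and we are done. Otherwise $Y$ is a separable reflexive space with Sz$(Y)=\omega$, so Corollary~\ref{uhstable} shows that $X$ is reflexive with Sz$(X)=\omega$; therefore $X\in\cal C_{aus}$.

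\emph{Stability of $\cal C_{auc}\cap\cal C_{aus}$ under coarse Lipschitz embeddings.} Let $Y\in\cal C_{auc}\cap\cal C_{aus}$ and let $f:X\to Y$ be a coarse Lipschitz embedding. First, $X$ is separable: since $f$ is a coarse Lipschitz homeomorphism onto $f(X)$ we have $\omega_{f^{-1}}(\theta_0)=:R<\infty$ for some $\theta_0>0$, so pulling a countable dense subset of $f(X)\subseteq Y$ back through $f^{-1}$ produces a countable $R$-net in $X$. Since $Y$ is reflexive with Sz$(Y)\le\omega$ and Sz$(Y^*)\le\omega$, Theorem~\ref{coarsestable} gives that $X$ is reflexive with Sz$(X)\le\omega$ and Sz$(X^*)\le\omega$; combined with separability this means $X\in\cal C_{auc}\cap\cal C_{aus}$.

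There is no genuine obstacle here: the substantive content is contained in Theorems~\ref{stability} and~\ref{coarsestable} (and, for the stability of the Szlenk index under uniform homeomorphisms, in Theorem~5.5 of \cite{GodefroyKaltonLancien2001}), and the present statement is only their repackaging. The sole points deserving a word of care are the inheritance of separability through a coarse Lipschitz embedding into a separable space and the harmless finite dimensional cases.
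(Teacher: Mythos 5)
Your proof is correct and follows exactly the route the paper intends: the theorem is a restatement of Corollary~\ref{uhstable} and Theorem~\ref{coarsestable} via the renorming--Szlenk dictionary (Knaust--Odell--Schlumprecht for a.u.s., and the dual statement in the reflexive case for a.u.c.), and the only loose ends are the transfer of separability and the trivial finite-dimensional case, both of which you settle correctly.
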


\begin{bibsection}
\begin{biblist}
\bib{Baudier2007}{article}{
  author={Baudier, F.},
  title={Metrical characterization of super-reflexivity and linear type of Banach spaces},
  journal={Arch. Math.},
  volume={89},
  date={2007},
  pages={419\ndash 429},
}

\bib{BaudierLancien2008}{article}{
  author={Baudier, F.},
  author={Lancien, G.},
  title={Embeddings of locally finite metric spaces into Banach spaces},
  journal={Proc. Amer. Math. Soc.},
  volume={136},
  date={2008},
  pages={1029\ndash 1033},
}

\bib{BenyaminiLindenstrauss2000}{book}{
  author={Benyamini, Y.},
  author={Lindenstrauss, J.},
  title={Geometric nonlinear functional analysis. Vol. 1},
  series={American Mathematical Society Colloquium Publications},
  volume={48},
  publisher={American Mathematical Society},
  place={Providence, RI},
  date={2000},
}

\bib{Bourgain1986}{article}{
  author={Bourgain, J.},
  title={The metrical interpretation of super-reflexivity in Banach spaces},
  journal={Israel J. Math.},
  volume={56},
  date={1986},
  pages={221--230},
}

\bib{BourgainMilmanWolfson1986}{article}{
  author={Bourgain, J.},
  author={Milman, V.},
  author={Wolfson, H.},
  title={On type of metric spaces},
  journal={Trans. Amer. Math. Soc.},
  volume={294},
  date={1986},
  pages={295--317},
}

\bib{Enflo1972}{article}{
  author={Enflo, P.},
  title={Banach spaces which can be given an equivalent uniformly convex norm},
  journal={Israel J. Math.},
  volume={13},
  date={1972},
  pages={281\ndash 288 (1973)},
}

\bib{GodefroyKaltonLancien2001}{article}{
  author={Godefroy, G.},
  author={Kalton, N. J.},
  author={Lancien, G.},
  title={Szlenk indices and uniform homeomorphisms},
  journal={Trans. Amer. Math. Soc.},
  volume={353},
  date={2001},
  pages={3895\ndash 3918 (electronic)},
}

\bib{James1964}{article}{
  author={James, R. C.},
  title={Uniformly non-square Banach spaces},
  journal={Ann. of Math. (2)},
  volume={80},
  date={1964},
  pages={542\ndash 550},
}

\bib{JohnsonLindenstraussPreissSchechtman2002}{article}{
  author={Johnson, W. B.},
  author={Lindenstrauss, J.},
  author={Preiss, D.},
  author={Schechtman, G.},
  title={Almost Fr\'echet differentiability of Lipschitz mappings between infinite-dimensional Banach spaces},
  journal={Proc. London Math. Soc. (3)},
  volume={84},
  date={2002},
  pages={711--746},
}

\bib{KaltonRandrianarivony2008}{article}{
  author={Kalton, N. J.},
  author={Randrianarivony, N. L.},
  title={The coarse Lipschitz structure of $\ell _p\oplus \ell _q$},
  journal={Math. Ann.},
  volume={341},
  date={2008},
  pages={223--237},
}

\bib{KnaustOdellSchlumprecht1999}{article}{
  author={Knaust, H.},
  author={Odell, E.},
  author={Schlumprecht, T.},
  title={On asymptotic structure, the Szlenk index and UKK properties in Banach spaces},
  journal={Positivity},
  volume={3},
  date={1999},
  pages={173--199},
}

\bib{Lancien1995}{article}{
  author={Lancien, G.},
  title={On uniformly convex and uniformly Kadec-Klee renormings},
  journal={Serdica Math. J.},
  volume={21},
  date={1995},
  pages={1\ndash 18},
}

\bib{Lancien2006}{article}{
  author={Lancien, G.},
  title={A survey on the Szlenk index and some of its applications},
  journal={Revista Real Acad. Cienc. Serie A Mat.},
  volume={100},
  date={2006},
  pages={209\ndash 235},
}

\bib{LindenstraussTzafriri1977}{book}{
  author={Lindenstrauss, J.},
  author={Tzafriri, L.},
  title={Classical Banach spaces, I, Sequence spaces},
  publisher={Springer-Verlag},
  place={Berlin},
  date={1977},
}

\bib{MaureyMilmanTomczak1995}{article}{
  author={Maurey, B.},
  author={Milman, V. D.},
  author={Tomczak-Jaegermann, N.},
  title={Asymptotic infinite-dimensional theory of Banach spaces},
  conference={ title={Geometric aspects of functional analysis}, address={Israel}, date={1992--1994}, },
  book={ series={Oper. Theory Adv. Appl.}, volume={77}, publisher={Birkh\"auser}, place={Basel}, },
  date={1995},
  pages={149--175},
}

\bib{MendelNaor2007}{article}{
  author={Mendel, M.},
  author={Naor, A.},
  title={Scaled Enflo type is equivalent to Rademacher type},
  journal={Bull. Lond. Math. Soc.},
  volume={39},
  date={2007},
  pages={493--498},
}

\bib{MendelNaor2008}{article}{
  author={Mendel, M.},
  author={Naor, A.},
  title={Metric cotype},
  journal={Ann. of Math.(2)},
  volume={168},
  date={2008},
  pages={247\ndash 298},
}

\bib{Milman1971}{article}{
  author={Milman, V. D.},
  title={Geometric theory of Banach spaces. II. Geometry of the unit ball},
  language={Russian},
  journal={Uspehi Mat. Nauk},
  volume={26},
  date={1971},
  pages={73\ndash 149},
  note={English translation: Russian Math. Surveys {\bf 26} (1971), 79--163},
}

\bib{OdellSchlumprecht2002}{article}{
  author={Odell, E.},
  author={Schlumprecht, Th.},
  title={Trees and branches in Banach spaces},
  journal={Trans. Amer. Math. Soc.},
  volume={354},
  date={2002},
  pages={4085--4108 (electronic)},
}

\bib{OdellSchlumprecht2006}{article}{
  author={Odell, E.},
  author={Schlumprecht, T.},
  title={Embeddings into Banach spaces with finite dimensional decompositions},
  journal={Revista Real Acad. Cienc. Serie A Mat.},
  volume={100},
  date={2006},
  pages={295\ndash 323},
}

\bib{Pisier1975}{article}{
  author={Pisier, G.},
  title={Martingales with values in uniformly convex spaces},
  journal={Israel J. Math.},
  volume={20},
  date={1975},
  pages={326--350},
}

\bib{Ribe1976}{article}{
  author={Ribe, M.},
  title={On uniformly homeomorphic normed spaces},
  journal={Ark. Mat.},
  volume={14},
  date={1976},
  pages={237--244},
}

\bib{Ribe1984}{article}{
  author={Ribe, M.},
  title={Existence of separable uniformly homeomorphic nonisomorphic Banach spaces},
  journal={Israel J. Math.},
  volume={48},
  date={1984},
  pages={139--147},
}

\bib{Szlenk1968}{article}{
  author={Szlenk, W.},
  title={The non existence of a separable reflexive Banach space universal for all separable reflexive Banach spaces},
  journal={Studia Math.},
  volume={30},
  date={1968},
  pages={53\ndash 61},
}

\end{biblist}
\end{bibsection}

\end{document}